\documentclass[pdflatex,sn-mathphys-num]{sn-jnl}

\usepackage{enumitem}

\usepackage{hyperref}
\usepackage{amsthm}
\usepackage{chngcntr}
\counterwithin*{equation}{section}
\counterwithin*{equation}{subsection}
\usepackage{stmaryrd}
\usepackage[all]{xy}
\usepackage{tikz-cd}
\usepackage{stmaryrd}
\usepackage[utf8]{inputenc}
\usepackage{csquotes}
\usepackage{cooltooltips}
\usepackage[english]{babel}

\usepackage{amssymb}
\usepackage{mathtools}

\usepackage[T1]{fontenc}
\usepackage{textcomp}

\usepackage{color}

\newtheorem{theorem}{Theorem}[section]
\newtheorem{corollary}[theorem]{Corollary}
\newtheorem{lemma}[theorem]{Lemma}
\newtheorem{definition}[theorem]{Definition}
\newtheorem{definitionlemma}[theorem]{Definition-Lemma}
\newtheorem{proposition}[theorem]{Proposition}
\newtheorem{remark}[theorem]{Remark}

\let\originalmiddle=\middle
\renewcommand{\middle}[1]{\mathrel{}\originalmiddle#1\mathrel{}}

\newcommand{\ee}[2]{{\sf \underline{e}}_{\scalebox{0.5}{$#1,#2$}}}

\newcommand{\e}[1]{{ \sf e}_{\scalebox{0.5}{$#1$}}}
\newcommand{\hq}[4]{\ensuremath{h_{#1,#2}^{#3,#4}}}
\newcommand{\hh}[5]{\ensuremath{h_{#1,#2,#3}^{#4,#5}}}
\newcommand{\h}[3]{\ensuremath{h_{#1}^{#2,#3}}}

\newcommand\restr[2]{{
    \left.\kern-\nulldelimiterspace 
    #1 
    \vphantom{\big|} 
    \right|_{#2} 
}}

\DeclareMathOperator{\Aut}{Aut}
\DeclareMathOperator{\Hom}{Hom}

\DeclareMathOperator{\End}{End}

\begin{document}

\title{Prime-Power Multiplicative Modular Monoid Morphisms}
\author[1]{Joseph Atalaye\footnote{Joseph Atalaye ORCID: \href{https://orcid.org/0009-0005-7329-2889}{0009-0005-7329-2889}}}
\email{26828146@sun.ac.za}
\equalcont{These authors contributed equally to this work.}

\author*[1]{Liam Baker\footnote{Liam Baker ORCID: \href{https://orcid.org/0000-0003-2728-3963}{0000-0003-2728-3963}}~}
\email{liambaker@sun.ac.za}
\equalcont{These authors contributed equally to this work.}

\author[2]{Sophie Marques\footnote{Sophie Marques ORCID: \href{https://orcid.org/0000-0001-8700-3375}{0000-0001-8700-3375}}}
\email{smarques@sun.ac.za}
\equalcont{These authors contributed equally to this work.}

\affil*[1]{\orgdiv{Department of Mathematical Sciences}, \orgname{Stellenbosch University}, \orgaddress{\city{Stellenbosch}, \postcode{7600}, \country{South Africa}}}
\affil[2]{\orgdiv{Centre of Mathematics}, \orgname{University of Minho}, \orgdiv{\city{Braga}, \country{Portugal}}}

\keywords{Multiplicative homomorphism, monoid, group theory, algebraic structures, semi-direct product, unit group, ring theory \\\\
\textbf{MSC codes:} 20B25, 20M32, 20E99}

\abstract{
We undertake a systematic study of multiplicative monoid morphisms between prime-power modular monoids $(\mathbb{Z}/p^e\mathbb{Z},\cdot)$ and $(\mathbb{Z}/q^f\mathbb{Z},\cdot)$, for arbitrary primes $p,q$ and integers $e,f\geq 1$.
Our approach proceeds in three stages.
We first determine the hom-set $\Hom(U_{p^e},U_{q^f})$ of group morphisms between the corresponding unit groups and exhibit explicit isomorphisms describing its group structure.
We then identify the subset $\mathcal{H}_{p^e,q^f,v}$ of unit group morphisms that extend to full monoid morphisms on $(\mathbb{Z}/p^e\mathbb{Z},\cdot)$, characterizing it via explicit divisibility conditions and describing its algebraic structure.
Finally, we give a complete description of $\Hom_{\mathbf{Mon}}((\mathbb{Z}/p^e\mathbb{Z},\cdot), (\mathbb{Z}/q^f\mathbb{Z},\cdot))$ and establish an explicit monoid isomorphism between this hom-set and a naturally defined monoid built from unit groups of lower-level modular rings.
}

\maketitle

\setcounter{tocdepth}{3}

\section*{Introduction}

The prime-power multiplicative modular monoids $(\mathbb{Z}/p^e\mathbb{Z},\cdot)$
occupy a fundamental position in algebra and number theory. Despite their natural
importance, the morphism theory of such monoids remains largely unexplored compared
with the well-developed theories of additive group homomorphisms and ring
homomorphisms. The present paper undertakes a systematic and complete determination
of the hom-set
\[
\Hom_{\mathbf{Mon}}\bigl((\mathbb{Z}/p^e\mathbb{Z},\cdot),
(\mathbb{Z}/q^f\mathbb{Z},\cdot)\bigr)
\]
for arbitrary primes $p,q$ and integers $e,f\geq 1$, and investigates the algebraic
structure it carries. Four independent motivations justify this study.

\medskip
\noindent\textbf{Ring deformations and multiplicative automorphisms.}
As observed in \cite{sophieLee}, every multiplicative automorphism
$\phi\in\Aut(\mathbb{Z}/p^e\mathbb{Z},\cdot)$ induces a new ring structure on
$\mathbb{Z}/p^e\mathbb{Z}$ via the twisted addition
\[
a+_\phi b \;=\; \phi^{-1}(\phi(a)+\phi(b)),
\quad a,b\in\mathbb{Z}/p^e\mathbb{Z},
\]
yielding a ring $(\mathbb{Z}/p^e\mathbb{Z},+_\phi,\cdot)$ isomorphic to the
original through $\phi$. Moreover, $\phi$ induces an action
$a\cdot_\phi b = \phi(a)b$ of the deformed ring on the original additive group,
making $(\mathbb{Z}/p^e\mathbb{Z},+,\cdot)$ an algebra over
$(\mathbb{Z}/p^e\mathbb{Z},+_\phi,\cdot)$. More generally, a multiplicative
monoid morphism
$\varphi:(\mathbb{Z}/p^e\mathbb{Z},\cdot)\to(\mathbb{Z}/q^f\mathbb{Z},\cdot)$
encodes a compatibility between the multiplicative structures on both sides, and
a precise description of
$\Hom_{\mathbf{Mon}}\bigl((\mathbb{Z}/p^e\mathbb{Z},\cdot),
(\mathbb{Z}/q^f\mathbb{Z},\cdot)\bigr)$
constitutes a fundamental step toward a systematic study of such deformations.
Automorphisms of monoids have been investigated in various contexts; see
\cite{putcha1983automorphism,can2021incidence,
szechtman2005automorphism,atalaye2026automorphism}.

\medskip
\noindent\textbf{Monoid actions and their classification.}
A left action of a monoid $M$ on a set $X$ is a map $M\times X\to X$ satisfying
$\mathbf{1}_M x = x$ and $(m_1m_2)x=m_1(m_2x)$ for all $x\in X$ and
$m_1,m_2\in M$ \cite[p.~8]{steinberg2016representation}, or equivalently a
monoid homomorphism $\rho:M\to\End(X)$. Thus classifying all $M$-actions on
$X$ amounts precisely to describing $\Hom_{\mathbf{Mon}}(M,\End(X))$, placing
the computation of monoid homomorphisms at the heart of the theory. Since
$(\mathbb{Z}/p^e\mathbb{Z},\cdot)$ possesses an absorbing element $0$, every
action of this monoid reflects the interplay between its invertible and
noninvertible elements: units correspond to reversible dynamics, idempotents to
stabilization, and nilpotents to collapse toward absorbing states. Understanding
\[
\Hom_{\mathbf{Mon}}\bigl((\mathbb{Z}/p^e\mathbb{Z},\cdot),
(\mathbb{Z}/q^f\mathbb{Z},\cdot)\bigr)
\quad\text{and}\quad
\End_{\mathbf{Mon}}(\mathbb{Z}/p^e\mathbb{Z},\cdot)
\]
thus provides a foundation for classifying actions and representation-theoretic
structures associated with modular arithmetic.

\medskip
\noindent\textbf{Semidirect products of modular monoids.}
Given monoids $S$ and $T$ together with an action $\rho:T\to\End(S)$, the
semidirect product $S\rtimes_\rho T$ is defined on $S\times T$ by
\[
(s_1,t_1)(s_2,t_2)=\bigl(s_1\,\rho(t_1)(s_2),\;t_1 t_2\bigr),
\]
a construction developed in
\cite{usenko1982semidirect,preston1986semidirect,
blanchet1996semidirect,saito1989orthodox}.
A natural instance is the semidirect product of $(\mathbb{Z}/q^f\mathbb{Z},+)$
with $(\mathbb{Z}/p^e\mathbb{Z},\times)$, which requires a monoid homomorphism
\[
\varphi:(\mathbb{Z}/p^e\mathbb{Z},\times)
\longrightarrow \End(\mathbb{Z}/q^f\mathbb{Z},+).
\]
Since $(\mathbb{Z}/q^f\mathbb{Z},+)$ is cyclic, every additive endomorphism is
multiplication by some element of $\mathbb{Z}/q^f\mathbb{Z}$, yielding a
canonical identification
$\End(\mathbb{Z}/q^f\mathbb{Z},+)\cong(\mathbb{Z}/q^f\mathbb{Z},\times)$.
Hence the study of such semidirect products reduces precisely to describing
$\Hom_{\mathbf{Mon}}\bigl((\mathbb{Z}/p^e\mathbb{Z},\times),
(\mathbb{Z}/q^f\mathbb{Z},\times)\bigr)$,
which simultaneously classifies all admissible actions and determines which
semidirect products of additive and multiplicative modular structures exist and
how they behave.

\medskip
\noindent\textbf{Automorphisms of direct products.}
A further motivation arises from our recent work \cite{atalaye2026automorphism}
on the automorphism group of direct products
$\prod_{i=1}^{n}(\mathbb{Z}/p_i^{e_i}\mathbb{Z},\cdot)$.
Every endomorphism of such a product is represented by a matrix
$(\alpha_{ij})$ with entries
\[
\alpha_{ij}\in
\Hom\!\bigl(\mathbb{Z}/p_j^{e_j}\mathbb{Z},\,\mathbb{Z}/p_i^{e_i}\mathbb{Z}\bigr)
\]
subject to explicit compatibility conditions. A precise description of the
individual hom-sets
$\Hom\bigl(\mathbb{Z}/p_j^{e_j}\mathbb{Z},\mathbb{Z}/p_i^{e_i}\mathbb{Z}\bigr)$
is therefore indispensable for determining the endomorphism monoid and,
ultimately, the automorphism group of such products.

\medskip
These four motivations converge on the explicit computation of
$\Hom_{\mathbf{Mon}}\bigl((\mathbb{Z}/p^e\mathbb{Z},\cdot),
(\mathbb{Z}/q^f\mathbb{Z},\cdot)\bigr)$,
which to the best of our knowledge has not been carried out in full generality
in the existing literature. Our approach proceeds in three stages:
Section~\ref{unithom} computes group morphisms $U_{p^e}\to U_{q^f}$ and their
algebraic structure; Section~\ref{lifunithom} determines which unit morphisms
extend to full monoid morphisms; and Section~\ref{modhom} synthesizes these
results into a complete classification of
$\Hom_{\mathbf{Mon}}\bigl((\mathbb{Z}/p^e\mathbb{Z},\cdot),
(\mathbb{Z}/q^f\mathbb{Z},\cdot)\bigr)$
and investigates its global algebraic structure.

\medskip
\textbf{Notation:} for the duration of the paper we consider:
\begin{itemize}
    \item $p$ and $q$ to be prime,
    \item $e$ and $f$ to be positive integers, and
    \item when the unit group $U_{p^e}$ is cyclic, we fix $a \in \mathbb{Z}$ such that $[a]_{p^e}$ generates $U_{p^e}$, and similarly for $[c]_{q^f}$ generating $U_{q^f}$ when it is cyclic.
\end{itemize}

\section{Unit morphisms}\label{unithom}

The study of multiplicative monoid morphisms between prime-power 
modular monoids rests fundamentally on understanding group 
homomorphisms between the corresponding unit groups. We begin by 
recalling a basic compatibility between generators under reduction. 
Given $0\leq f\leq e$ and $a\in\mathbb{Z}$, it is clear that 
$[a]_{p^e}\in U_{p^e}$ implies $[a]_{p^f}\in U_{p^f}$. Moreover:
\begin{itemize}
    \item when $p$ is odd, if $[a]_{p^e}$ generates $U_{p^e}$ 
    then $[a]_{p^f}$ generates $U_{p^f}$;
    \item when $p=2$ and $e\geq 3$, if $\{[a]_{2^e},[b]_{2^e}\}$ 
    generates $U_{2^e}$ then $\{[a]_{2^f},[b]_{2^f}\}$ generates 
    $U_{2^f}$.
\end{itemize}

\subsection{Computation of the hom-set of unit morphisms}
\label{setunithom}

We now describe explicitly all group homomorphisms between the unit groups $U_{p^e}$ and $U_{q^f}$.
The structure of these morphisms depends on the parities of $p$ and $q$ and on the value of $e$.
The proof of the following lemma is left to the reader, but it follows from the group structure of $U_{p^e}$.

\begin{lemma}\label{Lem411}
\begin{enumerate}
    \item If $p$ is even and $e\in\{1,2\}$, or $p$ is odd. 
    Then
    \[
    \Hom(U_{p^e},U_{q^f})
    =\left\{
    \hh{a}{c}{t}{p^e}{q^f}
    :t\in\mathbb{Z},\;
    \frac{\varphi(q^f)}{\gcd(\varphi(q^f),\varphi(p^e))}\mid t
    \right\},
    \]
    where $\hh{a}{c}{t}{p^e}{q^f}:U_{p^e}\to U_{q^f}$ is the 
    well-defined group homomorphism sending $[a]_{p^e}^u$ to 
    $[c]_{q^f}^{tu}$ for all $u\in\mathbb{N}$.

    \item If $p=q=2$ and $e\geq 3$, then
    \[
    \Hom(U_{2^e},U_{2^f})
    =\bigl\{\h{t}{2^e}{2^f}:t=(t_1,t_2,t_3,t_4)\in\mathbb{Z}^4\bigr\},
    \]
    where $\h{t}{2^e}{2^f}:U_{2^e}\to U_{2^f}$ is the well-defined 
    group homomorphism sending
    \[
    [-1]_{2^e}\longmapsto [-1]_{2^f}^{t_1}[5]_{2^f}^{2^{f-3}t_2}
    \quad\text{and}\quad
    [5]_{2^e}\longmapsto [-1]_{2^f}^{t_3}[5]_{2^f}^{\nu_f(e)t_4},
    \]
    with
    \[
    \nu_f(e)=
    \begin{cases}
    2^{f-e} & \text{if }f\geq e,\\
    1 & \text{otherwise.}
    \end{cases}
    \]

    \item If $p=2$, $e\geq 3$, and $q$ is odd.
    Then
    \[
    \Hom(U_{2^e},U_{q^f})
    =\left\{
    \hq{c}{t}{2^e}{q^f}
    :t=(t_1,t_2)\in\mathbb{Z}^2,\;
    \varphi(q^f)\mid 2t_1,\;
    \frac{\varphi(q^f)}{\gcd(\varphi(q^f),2^{e-2})}\mid t_2
    \right\},
    \]
    where $\hq{c}{t}{2^e}{q^f}:U_{2^e}\to U_{q^f}$ is the 
    well-defined group homomorphism sending $[-1]_{2^e}$ to 
    $[c]_{q^f}^{t_1}$ and $[5]_{2^e}$ to $[c]_{q^f}^{t_2}$.
\end{enumerate}
\end{lemma}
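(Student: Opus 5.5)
The plan is to reduce everything to one elementary fact about cyclic groups, combined with the standard structure theorem for $U_{p^e}$. The fact is this: if $C_n=\langle x\rangle$ is cyclic of order $n$ and $G$ is any group, then a homomorphism $C_n\to G$ is determined by the image of $x$, and $x\mapsto g$ defines a well-defined homomorphism exactly when $g^n=1$. When $G=\langle y\rangle$ is cyclic of order $m$ and $g=y^t$, the condition $g^n=1$ reads $m\mid tn$. This is equivalent to $\frac{m}{\gcd(m,n)}\mid t$, since $\frac{m}{\gcd(m,n)}$ and $\frac{n}{\gcd(m,n)}$ are coprime. For groups of the form $A\times B$ with $A,B$ cyclic, a homomorphism is an independent choice of images of the two generators, each subject to the order condition, because the target is abelian. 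The structure theorem gives:
\begin{itemize}
\item $U_{p^e}$ is cyclic of order $\varphi(p^e)$ for $p$ odd, or for $p=2$ with $e\le 2$;
\item for $e\geq 3$, $U_{2^e}=\langle[-1]\rangle\times\langle[5]\rangle$, with factors of orders $2$ and $2^{e-2}$.
\end{itemize}

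\textbf{Case 1.} Here the source is cyclic, generated by $[a]_{p^e}$, of order $n=\varphi(p^e)$. Since the statement writes images as powers of $[c]_{q^f}$, I read it with the target $U_{q^f}$ cyclic of order $m=\varphi(q^f)$. A homomorphism sends $[a]$ to some $[c]^t$, and the criterion gives exactly $\frac{\varphi(q^f)}{\gcd(\varphi(q^f),\varphi(p^e))}\mid t$. Well-definedness of $[a]^u\mapsto[c]^{tu}$ is then immediate, since $[a]^u=[a]^{u'}$ forces $n\mid u-u'$, hence $m\mid t(u-u')$.

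\textbf{Case 3.} The source is $\langle[-1]\rangle\times\langle[5]\rangle$ and the target is cyclic of order $\varphi(q^f)$. The image $[c]^{t_1}$ of $[-1]$ must satisfy $\varphi(q^f)\mid 2t_1$. The image $[c]^{t_2}$ of $[5]$ must satisfy $\varphi(q^f)\mid 2^{e-2}t_2$, which is equivalent to $\frac{\varphi(q^f)}{\gcd(\varphi(q^f),2^{e-2})}\mid t_2$. Conversely, any such pair extends by the product decomposition.

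\textbf{Case 2.} The image of $[-1]$ must be an element of order dividing $2$ in $U_{2^f}$. For $f\ge 3$, the $2$-torsion of $\langle[-1]\rangle\times\langle[5]\rangle$ is $\{[-1]^{t_1}[5]^{2^{f-3}t_2}\}$, because the unique involution-or-trivial elements of $\langle[5]\rangle$ (order $2^{f-2}$) are the powers of $[5]^{2^{f-3}}$. The image of $[5]$ is $[-1]^{t_3}[5]^{s}$, and it must satisfy $([-1]^{t_3}[5]^s)^{2^{e-2}}=1$. Since $e\ge3$, the $[-1]$ part dies, and the condition becomes $2^{f-2}\mid s\,2^{e-2}$. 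This says $2^{f-e}\mid s$ when $f\ge e$, and imposes nothing otherwise; that is, $s=\nu_f(e)t_4$. The converse again follows from the direct-product decomposition.

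\textbf{Main obstacle.} The hardest part will be the degenerate small-$f$ range in Case 2, namely $f\in\{1,2\}$. There $U_{2^f}$ is trivial or $\{\pm1\}$, and $2^{f-3}$ is not an integer. I would handle this by observing that $[5]_{2^f}=[1]_{2^f}$ for $f\le 2$, so every expression $[5]_{2^f}^{\,\cdot}$ is interpreted as $1$. The formula then reduces to $[-1]\mapsto[-1]^{t_1}$ and $[5]\mapsto[-1]^{t_3}$, which clearly exhausts all homomorphisms into a group of exponent dividing $2$.
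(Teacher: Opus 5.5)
Your proposal is correct and takes the route the paper indicates; the paper leaves the proof to the reader, citing only the structure of $U_{p^e}$. You use that a homomorphism out of a cyclic group, or out of $\langle[-1]_{2^e}\rangle\times\langle[5]_{2^e}\rangle$, is an independent choice of generator images subject to order conditions, and you translate these correctly into each stated divisibility, including the convention $[5]_{2^f}=[1]_{2^f}$ for $f\le 2$. Your reading of part~(1) with $U_{q^f}$ cyclic is in fact necessary rather than merely convenient: for $q=2$, $f\ge 3$ the displayed set does not exhaust the hom-set, e.g.\ $\Hom(U_3,U_8)$ has four elements because every element of $U_8$ squares to $1$.
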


\begin{remark}\label{Rem412}
Specializing statement~(1) to $p=q$ and $e=f$, the divisibility 
condition on $t$ reduces to $1\mid t$, which is vacuous. Hence,
\(\End(U_{p^e},\cdot)
=\bigl\{\hh{a}{a}{t}{p^e}{p^e}\mid t\in\mathbb{Z}\bigr\}.
\)
\end{remark}

\subsection{Group structure of the hom-set of unit morphisms}
\label{Struchom}

We now identify the algebraic structure of $\Hom(U_{p^e},U_{q^f})$ 
endowed with the pointwise product: for $\varphi,\psi\in 
\Hom(U_{p^e},U_{q^f})$, the product $\varphi\psi$ is defined by
\[
(\varphi\psi)(x)=\varphi(x)\psi(x)
\quad\text{for all }x\in U_{p^e},
\]
where the multiplication on the right-hand side takes place in $U_{q^f}$.
Since $U_{q^f}$ is abelian, this operation is again a group homomorphism, making $\Hom(U_{p^e},U_{q^f})$ itself a group. 
The following proposition exhibits explicit isomorphisms with familiar abelian groups.
We are leaving the proof to the reader since it is a routine group homomorphic theoretic check.

\begin{proposition}\label{Prop313}
\begin{enumerate}
    \item If $p$ is even and $e\in\{1,2\}$, or $p$ is odd 
    and set $d=\gcd(\varphi(q^f),\varphi(p^e))$. The map
    \[
    H_a^{p^e,q^f}:
    \bigl(\Hom(U_{p^e},U_{q^f}),\cdot\bigr)
    \longrightarrow
    (\mathbb{Z}/d\mathbb{Z},+),
    \qquad
    \hh{a}{c}{t}{p^e}{q^f}
    \longmapsto
    \left[\frac{td}{\varphi(q^f)}\right]_d,
    \]
    is a well-defined group isomorphism. In particular,
    \[
    \Hom(U_{p^e},U_{q^f})\cong\mathbb{Z}/\gcd(\varphi(q^f),\varphi(p^e))\mathbb{Z}.
    \]

    \item If $p=q=2$ and $e\geq 3$, the map
    \[
    H^{2^e,2^f}:
    \bigl(\Hom(U_{2^e},U_{2^f}),\cdot\bigr)
    \longrightarrow
    \bigl((\mathbb{Z}/2\mathbb{Z})^3
    \times\mathbb{Z}/2^{\min(e,f)-2}\mathbb{Z},+\bigr),
    \]
    \[
    \h{t}{2^e}{2^f}
    \longmapsto
    \bigl([t_1]_2,[t_2]_2,[t_3]_2,[t_4]_{2^{\min(e,f)-2}}\bigr),
    \]
    is a well-defined group isomorphism. In particular,
    \[
    \Hom(U_{2^e},U_{2^f})
    \cong(\mathbb{Z}/2\mathbb{Z})^3
    \times\mathbb{Z}/2^{\min(e,f)-2}\mathbb{Z}.
    \]

    \item If $p=2$, $e\geq 3$, and $q$ is odd, let 
    $a\in\mathbb{Z}$ with $o([c]_{q^f})=\varphi(q^f)$, and set
    $d=\gcd(\varphi(q^f),2^{e-2})$. The map
    \( 
    H_a^{2^e,q^f}:
    \bigl(\Hom(U_{2^e},U_{q^f}),\cdot\bigr)
    \longrightarrow
    (\mathbb{Z}/2\mathbb{Z}\times\mathbb{Z}/d\mathbb{Z},+),\)
 \[
    \hq{c}{t}{2^e}{q^f}
    \longmapsto
    \left(\left[\frac{2t_1}{\varphi(q^f)}\right]_2,
    \left[\frac{dt_2}{\varphi(q^f)}\right]_d\right),
    \]
    is a well-defined group isomorphism. In particular,
    \[
    \Hom(U_{2^e},U_{q^f})
    \cong\mathbb{Z}/2\mathbb{Z}
    \times\mathbb{Z}/\gcd(\varphi(q^f),2^{e-2})\mathbb{Z}.
    \]
\end{enumerate}
\end{proposition}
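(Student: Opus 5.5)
In all three cases I will use the same template. First, read off from Lemma~\ref{Lem411} a parametrization $t\mapsto h_t$ of $\Hom(U_{p^e},U_{q^f})$ by integer tuples subject to divisibility constraints. Second, check that the parametrization turns the pointwise product into addition of parameters: $h_t h_{t'}=h_{t+t'}$. Third, show that the proposed map is well defined, meaning it depends only on the homomorphism and not on the chosen $t$; it is then automatically a homomorphism by the second step. Finally, prove bijectivity, either directly or by comparing cardinalities.

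\textbf{Case (1).} For the additivity, $h_{a,c,t}([a]^u)\,h_{a,c,t'}([a]^u)=[c]^{tu}[c]^{t'u}=[c]^{(t+t')u}$, so $h_{a,c,t}h_{a,c,t'}=h_{a,c,t+t'}$. The condition $\frac{\varphi(q^f)}{d}\mid t$ makes $\frac{td}{\varphi(q^f)}$ an integer.

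For well-definedness, note that $h_{a,c,t}=h_{a,c,t'}$ exactly when $[c]^{t}=[c]^{t'}$, that is, when $\varphi(q^f)\mid t-t'$. Writing $t=\frac{\varphi(q^f)}{d}s$ and $t'=\frac{\varphi(q^f)}{d}s'$, this says $d\mid s-s'$. So the map $h\mapsto [s]_d$ is well defined and injective.

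For surjectivity, $s=1$ gives the exponent $t=\varphi(q^f)/d$. This is admissible, because $[c]^{\varphi(q^f)/d}$ has order $d$, which divides $\varphi(p^e)=o([a])$. Hence $[1]_d$ is hit, and since $[1]_d$ generates $\mathbb{Z}/d\mathbb{Z}$, the map is onto.

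\textbf{Case (3).} The argument runs coordinatewise as in case (1). The image of $[-1]$ must have order dividing $2$, so $\varphi(q^f)\mid 2t_1$. Two choices of $t_1$ give the same homomorphism exactly when $\varphi(q^f)\mid t_1-t_1'$, equivalently when $2\mid \frac{2t_1}{\varphi(q^f)}-\frac{2t_1'}{\varphi(q^f)}$. The $t_2$ coordinate is handled exactly like case (1), using $o([5]_{2^e})=2^{e-2}$. Independence of the two coordinates follows from $U_{2^e}=\langle -1\rangle\times\langle 5\rangle$.

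\textbf{Case (2).} This is the main obstacle, because the parametrization $\h{t}{2^e}{2^f}$ with $t\in\mathbb{Z}^4$ already has its constraints built into the exponents $2^{f-3}$ and $\nu_f(e)$. I will work in $U_{2^f}=\langle -1\rangle\times\langle 5\rangle$, assuming $f\ge3$ so that $2^{f-3}$ is an integer; the small cases $f\le 2$ would have to be inspected separately, and I expect them to be a source of friction with the stated formula.

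The image of $[-1]$, which has order $2$, is $[-1]^{t_1}[5]^{2^{f-3}t_2}$. Since $o([5]_{2^f})=2^{f-2}$, its dependence on $t_1,t_2$ is exactly through $[t_1]_2$ and $[t_2]_2$. The image of $[5]$, which has order $2^{e-2}$, is $[-1]^{t_3}[5]^{\nu_f(e)t_4}$. Its first factor depends on $[t_3]_2$. For the second factor, $[5]^{\nu_f(e)}$ has order $2^{f-2}/\nu_f(e)=2^{\min(e,f)-2}$, so the factor depends exactly on $[t_4]_{2^{\min(e,f)-2}}$.

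This shows at once that the map is well defined and injective, and additivity is clear. For surjectivity, every tuple of residues is attained by some $t\in\mathbb{Z}^4$, and Lemma~\ref{Lem411}(2) says every such $h_t$ is a homomorphism. Bijectivity then follows, and the order count $2^3\cdot 2^{\min(e,f)-2}$ agrees with $|\Hom(C_2\times C_{2^{e-2}},C_2\times C_{2^{f-2}})|$.
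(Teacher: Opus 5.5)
Your proposal is correct, and it is the routine parameter-by-parameter verification that the paper leaves to the reader; the paper gives no proof of its own. Your hesitation about small exponents should be stated as a fact rather than an expectation. Part (2) is meaningless for $f=1$ and false for $f=2$: for example, $\Hom(U_8,U_4)\cong(\mathbb{Z}/2\mathbb{Z})^2$ has order $4$, not $8$. Likewise, your case (1) tacitly uses that $[c]_{q^f}$ generates $U_{q^f}$, and this hypothesis is genuinely needed: for $q=2$, $f\ge 3$ part (1) fails, since $|\Hom(U_3,U_8)|=4$ while $\gcd(\varphi(8),\varphi(3))=2$.
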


\section{Liftable unit morphisms}\label{lifunithom}

We define a distinguished subset of $\Hom(U_{p^e}, U_{q^f})$
consisting of those morphisms which, as we establish in
Section~\ref{modhom}, extend to monoid morphisms
$(\mathbb{Z}/p^e\mathbb{Z},\cdot)\to(\mathbb{Z}/q^f\mathbb{Z},\cdot)$. 

\subsection{Computation of the set of liftable unit morphisms}
\label{setlifunithom}

\begin{definition}\label{Definition421}
Let  $v \in \mathbb{N}^*$ with $v \leq f$. For $\ell \in \{p, q\}$ and integers $m \geq n \geq 0$,
denote by
\[
\pi^\ell_{m,n} : U_{\ell^m} \longrightarrow U_{\ell^n},
\qquad [r]_{\ell^m} \longmapsto [r]_{\ell^n},
\]
the canonical surjective group homomorphism, with the convention
$U_{\ell^0} := \{1\}$. We define $\mathcal{H}_{p^e,q^f,v}$ to be
the set of all group homomorphisms
$\phi : (U_{p^e}, \cdot) \to (U_{q^f}, \cdot)$ such that, for
every $0 \leq u \leq \min\!\left(e, \lfloor f/v \rfloor\right)$,
the map
\[
\phi_{u,v} : U_{p^{e-u}} \longrightarrow U_{q^{f-uv}},
\qquad [r]_{p^{e-u}} \longmapsto \pi^q_{f,f-uv}(\phi([r]_{p^e})),
\]
is a well-defined group homomorphism, i.e. independent of the
choice of lift $[r]_{p^e}$ of $[r]_{p^{e-u}}$ along
$\pi^p_{e,e-u}$. When it exists, $\phi_{u,v}$ is unique by
surjectivity of $\pi^p_{e,e-u}$, and is called the
\emph{$(u,v)$-induced morphism} associated with $\phi$.
Equivalently, $\phi \in \mathcal{H}_{p^e,q^f,v}$ if and only
if the diagram
\begin{equation}
    \label{diag:phi_uv}
    \xymatrix{
    U_{p^e} \ar[d]_{\pi^p_{e,e-u}} \ar[r]^{\phi}
    & U_{q^f} \ar[d]^{\pi^q_{f,f-uv}} \\
    U_{p^{e-u}} \ar[r]^{\phi_{u,v}}
    & U_{q^{f-uv}}
    }
\end{equation}
commutes for all such $u$. When $e = u$ and $f = uv$
simultaneously, both groups collapse to $\{1\}$ and
$\phi_{u,v}$ is the trivial morphism.
\end{definition}

\begin{corollary}\label{Cor424}
Let $0\leq u\leq e$. The map
\(\Theta_{p^e} :
(\mathcal{H}_{p^e,p^e,1},\circ)
\longrightarrow
(\mathcal{H}_{p^{e-u},p^{e-u},1},\circ),
\ 
\phi\longmapsto\phi_{u,1}
\)
is a well-defined monoid homomorphism.
Moreover, for all
$u,u'\in\mathbb{N}$ with $0\leq u'\leq u\leq e$:
\[
(\phi_{u,1})_{u',1} = \phi_{u',1}.
\]
\end{corollary}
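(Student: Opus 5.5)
The plan is to unwind Definition~\ref{Definition421} in the special case $p=q$, $e=f$, $v=1$, where $\min(e,\lfloor f/v\rfloor)=e$. The proof then reduces to three checks: well-definedness of $\Theta_{p^e}$, compatibility with composition and identities, and the transitivity identity $(\phi_{u,1})_{u',1}=\phi_{u',1}$. I would prove the transitivity identity first, because the well-definedness step uses it.

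\textbf{Transitivity.} Fix $\phi\in\mathcal{H}_{p^e,p^e,1}$ and $0\le u'\le u\le e$. For $[r]_{p^{e-u-u'}}$, first lift to $[r]_{p^{e-u}}$ and then to $[r]_{p^e}$. By definition,
\[
(\phi_{u,1})_{u',1}([r]_{p^{e-u-u'}})=\pi_{e-u,e-u-u'}\bigl(\phi_{u,1}([r]_{p^{e-u}})\bigr)=\pi_{e-u,e-u-u'}\pi_{e,e-u}\bigl(\phi([r]_{p^e})\bigr).
\]
Since $\pi_{e-u,e-u-u'}\circ\pi_{e,e-u}=\pi_{e,e-u-u'}$, the right-hand side equals $\phi_{u+u',1}([r])$. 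This shows that the $(u',1)$-induced map of $\phi_{u,1}$ exists and coincides with $\phi_{u+u',1}$.

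The statement is written as $\phi_{u',1}$, but the target level $p^{e-u-u'}$ forces this index shift. I would read the indices relative to the new exponent $e-u$, or equivalently read the identity as levels composing additively, and state that interpretation explicitly. The well-definedness of the iterated map must also be justified. For this, observe that every lift of a class mod $p^{e-u-u'}$ to mod $p^{e-u}$ comes from some lift mod $p^e$, because $\pi$ is surjective on units. So independence of the lift follows from the existence of $\phi_{u+u',1}$.

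\textbf{Well-definedness of $\Theta_{p^e}$.} For $\phi\in\mathcal{H}_{p^e,p^e,1}$, the map $\phi_{u,1}$ is a group homomorphism $U_{p^{e-u}}\to U_{p^{e-u}}$. It is well defined because $\phi\in\mathcal{H}_{p^e,p^e,1}$, and it is a homomorphism because $\pi_{e,e-u}$ is a surjective homomorphism. To see that $\phi_{u,1}\in\mathcal{H}_{p^{e-u},p^{e-u},1}$, I need every $u'$ with $0\le u'\le e-u$. The transitivity argument above gives exactly this, since $u+u'\le e$ lies in the range allowed by the definition.

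\textbf{Monoid homomorphism.} For $\phi,\psi\in\mathcal{H}_{p^e,p^e,1}$, I stack the commuting squares \eqref{diag:phi_uv}:
\[
\pi_{e,e-u}\circ\phi\circ\psi=\phi_{u,1}\circ\pi_{e,e-u}\circ\psi=\phi_{u,1}\circ\psi_{u,1}\circ\pi_{e,e-u}.
\]
This shows two things at once: $\phi\circ\psi\in\mathcal{H}_{p^e,p^e,1}$, so the source is closed under composition and is a monoid, and $(\phi\circ\psi)_{u,1}=\phi_{u,1}\circ\psi_{u,1}$ by uniqueness coming from surjectivity. The identity map clearly lies in $\mathcal{H}$ and induces the identity.

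\textbf{Main obstacle.} No step is computationally hard. The delicate point is the index bookkeeping in the transitivity identity, namely deciding how the stated $\phi_{u',1}$ is to be interpreted, together with the lift-independence argument for iterated induced maps.
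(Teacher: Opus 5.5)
Your proof is correct and is essentially the paper's argument: reduction maps compose, induced morphisms are unique because $\pi^p_{e,e-u}$ is surjective, and stacking the squares \eqref{diag:phi_uv} gives $(\phi\circ\psi)_{u,1}=\phi_{u,1}\circ\psi_{u,1}$.

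Your version is more careful than the paper's on two points. First, your relative indexing $(\phi_{u,1})_{u',1}=\phi_{u+u',1}$ for $u+u'\le e$ is consistent with Definition~\ref{Definition421}; this is the hypothesis your computation actually uses, so state it in place of $0\le u'\le u\le e$. The paper's diagram instead uses $\pi^p_{e-u,e-u'}$, which requires $u\le u'$, contrary to the stated hypothesis. Second, your lift argument proves that the $(u',1)$-induced map of $\phi_{u,1}$ exists. The paper simply presupposes this when it asserts that the right square commutes by definition, even though that existence is exactly the membership $\phi_{u,1}\in\mathcal{H}_{p^{e-u},p^{e-u},1}$ being proved.
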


\begin{proof}
\textbf{Well-definedness.}
We must show that $\phi_{u,1}\in\mathcal{H}_{p^{e-u},p^{e-u},1}$
for every $\phi\in\mathcal{H}_{p^e,p^e,1}$, and that
$(\phi_{u,1})_{u',1}=\phi_{u',1}$ for all $0\leq u'\leq u\leq e$.
Consider the diagram:
\[
\begin{tikzcd}[row sep=large, column sep=large]
U_{p^e}
  \arrow[r, "\pi^p_{e,\,e-u}"]
  \arrow[rr, bend left=35, "\pi^p_{e,\,e-u'}"]
  \arrow[d, "\phi"']
& U_{p^{e-u}}
  \arrow[r, "\pi^p_{e-u,\,e-u'}"]
  \arrow[d, "\phi_{u,1}"']
& U_{p^{e-u'}}
  \arrow[d, "(\phi_{u,1})_{u',1}"] \\
U_{p^e}
  \arrow[r, "\pi^p_{e,\,e-u}"']
  \arrow[rr, bend right=35, "\pi^p_{e,\,e-u'}"']
& U_{p^{e-u}}
  \arrow[r, "\pi^p_{e-u,\,e-u'}"']
& U_{p^{e-u'}}
\end{tikzcd}
\]
The left square commutes by definition of $\phi_{u,1}$
(Definition~\ref{Definition421}), and the right square commutes
by definition of $(\phi_{u,1})_{u',1}$. Hence the outer rectangle
commutes. Since
\[
\pi^p_{e,e-u'} = \pi^p_{e-u,e-u'}\circ\pi^p_{e,e-u},
\]
the outer rectangle says precisely that
$(\phi_{u,1})_{u',1}\circ\pi^p_{e,e-u'}=\pi^p_{e,e-u'}\circ\phi$.
By uniqueness of the induced morphism (Definition~\ref{Definition421},
using surjectivity of $\pi^p_{e,e-u'}$), this forces
$(\phi_{u,1})_{u',1}=\phi_{u',1}$, establishing well-definedness.

\medskip
\noindent\textbf{Monoid homomorphism.}
Let $\phi,\psi\in\mathcal{H}_{p^e,p^e,1}$. Consider the diagram:
\[
\begin{tikzcd}[row sep=large, column sep=large]
U_{p^e}
  \arrow[r, "\phi"]
  \arrow[d, "\pi^p_{e,\,e-u}"']
  \arrow[rr, bend left=25, "\psi\circ\phi"]
& U_{p^e}
  \arrow[r, "\psi"]
  \arrow[d, "\pi^p_{e,\,e-u}"']
& U_{p^e}
  \arrow[d, "\pi^p_{e,\,e-u}"] \\
U_{p^{e-u}}
  \arrow[r, "\phi_{u,1}"']
  \arrow[rr, bend right=25, "\psi_{u,1}\circ\phi_{u,1}"']
& U_{p^{e-u}}
  \arrow[r, "\psi_{u,1}"']
& U_{p^{e-u}}
\end{tikzcd}
\]
The left square commutes by definition of $\phi_{u,1}$, and the
right square commutes by definition of $\psi_{u,1}$. Hence the
outer rectangle commutes, giving
\[
(\psi_{u,1}\circ\phi_{u,1})\circ\pi^p_{e,e-u}
=\pi^p_{e,e-u}\circ(\psi\circ\phi).
\]
By uniqueness of $(\psi\circ\phi)_{u,1}$, we conclude
\[
(\psi\circ\phi)_{u,1} = \psi_{u,1}\circ\phi_{u,1},
\]
i.e. $\Theta_{p^e}(\psi\circ\phi)
=\Theta_{p^e}(\psi)\circ\Theta_{p^e}(\phi)$.
Finally, $\Theta_{p^e}(\mathrm{id}_{U_{p^e}})
=(\mathrm{id}_{U_{p^e}})_{u,1}=\mathrm{id}_{U_{p^{e-u}}}$,
so $\Theta_{p^e}$ preserves the identity. Hence $\Theta_{p^e}$
is a monoid homomorphism.
\end{proof}

From this section we denote by \(K^{p}_{e, e-u}:=[1]_{p^e} +\langle p^{e-u}\rangle_{p^e}\) (resp. \(K^{q}_{f, f-uv}:=[1]_{q^f} +\langle q^{f-uv}\rangle_{q^f}\)) to be the kernel of the \(\pi^{p}_{e, e-u}\) (resp. \(\pi^{p}_{f, f-uv}\)). Then we have the following result.

\begin{lemma}\label{Lem27.00hom}
Let $\phi \in \Hom(U_{p^e}, U_{q^f})$, and let $u\in\mathbb{N}$
with $0\leq u\leq\min(e,\lfloor f/v\rfloor)$ for some
$v\in\mathbb{N}^*$ with $v\leq f$. Then the following statements are equivalent:
\begin{enumerate}
    \item There exists a group homomorphism
    $\phi_{u,v} : U_{p^{e-u}} \to U_{q^{f-uv}}$ such that
    \[
    \pi^q_{f,f-uv}\circ\phi \;=\; \phi_{u,v}\circ\pi^p_{e,e-u}.
    \]

    \item $\phi\bigl(K^{p}_{e, e-u})
    \;\subseteq\;
    K^{q}_{f, f-uv}$.
\end{enumerate}
When these hold, $\phi_{u,v}$ is unique by surjectivity of
$\pi^p_{e,e-u}$, and is called the \emph{$(u,v)$-induced
morphism} associated with $\phi$.
\end{lemma}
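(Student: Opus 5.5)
This is the first isomorphism theorem (universal property of the quotient) applied to the surjection $\pi^p_{e,e-u}$, whose kernel is $K^p_{e,e-u}$. We prove the two implications separately and then deduce uniqueness.

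\textbf{(1) $\Rightarrow$ (2).} Let $x\in K^p_{e,e-u}$, so $\pi^p_{e,e-u}(x)=1$. Then
\[
\pi^q_{f,f-uv}(\phi(x))=\phi_{u,v}\bigl(\pi^p_{e,e-u}(x)\bigr)=\phi_{u,v}(1)=1,
\]
because $\phi_{u,v}$ is a group homomorphism. Hence $\phi(x)\in\ker\pi^q_{f,f-uv}=K^q_{f,f-uv}$.

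\textbf{(2) $\Rightarrow$ (1).} We define $\phi_{u,v}$ on a class $y\in U_{p^{e-u}}$ by choosing any preimage $x\in U_{p^e}$ and setting $\phi_{u,v}(y)=\pi^q_{f,f-uv}(\phi(x))$.
\begin{itemize}
\item A preimage exists because $\pi^p_{e,e-u}$ is surjective: a unit modulo $p^{e-u}$ lifts to a unit modulo $p^e$, since a representative coprime to $p$ stays coprime to $p$. When $e-u=0$ the target is trivial and surjectivity is immediate.
\item The value is independent of the lift. If $x,x'$ are two lifts, then $x^{-1}x'\in\ker\pi^p_{e,e-u}=K^p_{e,e-u}$. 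By (2), $\phi(x)^{-1}\phi(x')\in K^q_{f,f-uv}$, so $\phi(x)$ and $\phi(x')$ have the same image under $\pi^q_{f,f-uv}$.
\item $\phi_{u,v}$ is a homomorphism because it is computed from the composite homomorphism $\pi^q_{f,f-uv}\circ\phi$ on lifts, and the product of lifts is a lift of the product.
\item The relation $\pi^q_{f,f-uv}\circ\phi=\phi_{u,v}\circ\pi^p_{e,e-u}$ holds by construction.
\end{itemize}

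\textbf{Uniqueness.} Suppose $\psi$ and $\psi'$ both satisfy (1). Then $\psi\circ\pi^p_{e,e-u}=\psi'\circ\pi^p_{e,e-u}$, and surjectivity of $\pi^p_{e,e-u}$ gives $\psi=\psi'$.

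The only steps needing care are the two kernel identifications:
\begin{itemize}
\item $\ker\pi^p_{e,e-u}=K^p_{e,e-u}$. An element $[r]_{p^e}$ lies in the kernel exactly when $r\equiv 1 \pmod{p^{e-u}}$, that is, $[r]_{p^e}\in[1]_{p^e}+\langle p^{e-u}\rangle_{p^e}$. Every such element is automatically a unit when $e-u\ge1$.
\item The degenerate case $e=u$. Here $K^p_{e,0}=[1]+\langle 1\rangle$ is all of $\mathbb{Z}/p^e\mathbb{Z}$, so the statement should be read with $K$ intersected with the unit group.
\end{itemize}
The same remarks apply on the $q$ side, including the degenerate case $f=uv$. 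With that reading, the argument is the standard factorization through the quotient.
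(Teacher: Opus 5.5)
Your proof is correct and follows essentially the same route as the paper. You prove (1)$\Rightarrow$(2) by chasing a kernel element through the commuting square, and (2)$\Rightarrow$(1) by defining $\phi_{u,v}$ on lifts, using $x^{-1}x'\in K^p_{e,e-u}$ for independence of the lift; your explicit checks of surjectivity, the homomorphism property and uniqueness fill in what the paper leaves to the reader, and your remark that for $u=e$ (resp.\ $f=uv$) the set $[1]+\langle p^{0}\rangle$ is all of $\mathbb{Z}/p^e\mathbb{Z}$, so it must be intersected with $U_{p^e}$ to equal $\ker\pi^p_{e,0}$, is a valid refinement the paper passes over.
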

\begin{proof}
We prove the equivalence as follows:

\noindent$(1)\implies(2)$.
Let $[r]_{p^e}\in K^p_{e, e-u}$, so
$\pi^p_{e,e-u}([r]_{p^e})=[1]_{p^{e-u}}$.
By commutativity of the diagram:
\[
\pi^q_{f,f-uv}(\phi([r]_{p^e}))
=\phi_{u,v}(\pi^p_{e,e-u}([r]_{p^e}))
=\phi_{u,v}([1]_{p^{e-u}})
=[1]_{q^{f-uv}},
\]
so $\phi([r]_{p^e})\in K^q_{f, f-uv}$, giving $\phi(K^p_{e, e-u})\subseteq K^q_{f, f-uv}$.

\noindent$(2)\implies(1)$.
Suppose $\phi(K^p_{e, e-u})\subseteq K^q_{f, f-uv}$. Define
\[
\phi_{u,v}([r]_{p^{e-u}})
:= \pi^q_{f,f-uv}(\phi([r]_{p^e})),
\]
for any lift $[r]_{p^e}\in U_{p^e}$ of $[r]_{p^{e-u}}$
along $\pi^p_{e,e-u}$.

We start by proving the well-definedness of \(\phi_{u,v}:\)
If $\pi^p_{e,e-u}([r]_{p^e})=\pi^p_{e,e-u}([r']_{p^e})$,
then $[r'r^{-1}]_{p^e}\in K^p_{e, e-u}$, hence
$\phi([r'r^{-1}]_{p^e})\in K^q_{f, f-uv}=\ker\pi^q_{f,f-uv}$.
Therefore:
\begin{align*}
    \pi^q_{f,f-uv}(\phi([r']_{p^e}))
&=\pi^q_{f,f-uv}(\phi([r]_{p^e})\cdot\phi([r'r^{-1}]_{p^e}))\\
&=\pi^q_{f,f-uv}(\phi([r]_{p^e}))\cdot
 \underbrace{\pi^q_{f,f-uv}(\phi([r'r^{-1}]_{p^e}))}_{=\,[1]_{q^{f-uv}}}\\
 &=\pi^q_{f,f-uv}(\phi([r]_{p^e})),
\end{align*}

so $\phi_{u,v}$ is independent of the choice of lift.
Moreover, $\phi_{u,v}$ being a homomorphism and the commutativity of the diagram \eqref{diag:phi_uv} can be proven easily by the reader. \qedhere
\end{proof}

It follows from the definition that the only natural case in which
$\mathcal{H}_{p^e,q^f,v}$ carries a monoid structure under
composition is when $p=q$ and $e=f$. We also note that
$\mathcal{H}_{p^e,q^f,v}$ is independent of the choice of
$v\in\mathbb{N}^*$ with $v\leq f$; this is established as a
consequence of Proposition~\ref{Prop324} below.

\begin{proposition}\label{Prop324}
Let $v\in\mathbb{N}^*$ with $v\leq f$.
\begin{enumerate}
\item If $p$ is  and $e\in\{1,2\}$, or $p$ is odd, then:
\begin{itemize}
\item[(a)] If $p\nmid q-1$, then
$\mathcal{H}_{p^e,q^f,v}=\Hom(U_{p^e},U_{q^f})$.

\item[(b)] If $p\mid q-1$, choose $a,c\in\mathbb{Z}$ with
$o([a]_{p^e})=\varphi(p^e)$ and $o([c]_{q^f})=\varphi(q^f)$.
Then $q\nmid p-1$ and
\[
\mathcal{H}_{p^e,q^f,v}
=\left\{\hh{a}{c}{t}{p^e}{q^f}:t\in\mathbb{Z},\;
\frac{\varphi(q^f)}{\gcd(q-1,p-1)}\mid t\right\},
\]
where $\hh{a}{c}{t}{p^e}{q^f}$ is as in Lemma~\ref{Lem411}(1).
\end{itemize}

\item If $p,q$ are even and $e\geq 3$, then
$\mathcal{H}_{2^e,2^f,v}=\Hom(U_{2^e},U_{2^f})$.

\item If $p=2$, $e\geq 3$, and $q$ is odd, then
\[
\mathcal{H}_{2^e,q^f,v}
=\left\{\hq{c}{t}{2^e}{q^f}:t=(t_1,t_2)\in\mathbb{Z}^2,\;
\varphi(q^f)\mid 2t_1\;\text{and}\;\varphi(q^f)\mid t_2\right\},
\]
where $\hq{c}{t}{2^e}{q^f}$ is as in Lemma~\ref{Lem411}(3).
\end{enumerate}
\end{proposition}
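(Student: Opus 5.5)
The plan is to reduce everything to Lemma~\ref{Lem27.00hom}. It says $\phi\in\mathcal{H}_{p^e,q^f,v}$ if and only if $\phi(K^p_{e,e-u})\subseteq K^q_{f,f-uv}$ for every $0\leq u\leq \min(e,\lfloor f/v\rfloor)$. The case $u=0$ is vacuous, and the subgroups $K^p_{e,e-u}$ increase with $u$. So I would first record the orders and structure of these kernels:
\begin{itemize}
\item for $1\leq u\leq e-1$, $K^p_{e,e-u}=1+p^{e-u}\mathbb{Z}/p^e\mathbb{Z}$ is a $p$-group of order $p^u$;
\item for $u=e$, $K^p_{e,0}=U_{p^e}$;
\item for $f-uv\geq 1$, $K^q_{f,f-uv}$ is a $q$-group of order $q^{uv}$;
\item for $f=uv$, $K^q_{f,0}=U_{q^f}$ and the condition is empty.
\end{itemize}
The test then splits according to whether $p=q$ and whether the boundary value $u=e$ is reached.

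\textbf{Case $p=q=2$, $e\geq 3$ (part (2)).} I would check the condition on the generators of Lemma~\ref{Lem411}(2). Here $K^2_{e,e-u}$ is generated by $[5]^{2^{e-u-2}}$ (together with $-1$ when $e-u\leq 2$). Using $\h{t}{2^e}{2^f}([5])=[-1]^{t_3}[5]^{\nu_f(e)t_4}$, I would show that $[5]^{2^{k}}\equiv 1 \bmod 2^{k+2}$ and $[-1]^{t_3}$-factors disappear when raised to even powers. Together with the factor $\nu_f(e)=2^{f-e}$, this should give the required congruence modulo $2^{f-uv}$. The point to check is that $uv\geq u$, so the target kernel is always at least as large as needed.

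\textbf{Case $p\neq q$, $u<e$ (parts (1) and (3)).} $K^p_{e,e-u}$ is a $p$-group while $K^q_{f,f-uv}$ is a $q$-group, so the inclusion forces $\phi(K^p_{e,e-u})=1$. It therefore suffices to impose $\phi(K^p_{e,1})=1$, i.e.\ that $\phi$ kills the Sylow $p$-subgroup of $U_{p^e}$ (for $p$ odd), resp.\ the subgroup $\langle 5\rangle$ (for $p=2$). In terms of the parameter this reads as follows.
\begin{itemize}
\item For $p$ odd, $\hh{a}{c}{t}{p^e}{q^f}([a]^{p-1})=1$, i.e.\ $\varphi(q^f)\mid t(p-1)$. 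Combined with Lemma~\ref{Lem411}(1), this should give $\frac{\varphi(q^f)}{\gcd(\varphi(q^f),p-1)}\mid t$, which is vacuous beyond Lemma~\ref{Lem411} when $p\nmid q-1$ (since then $\gcd(\varphi(q^f),\varphi(p^e))=\gcd(\varphi(q^f),p-1)$). When $p\mid q-1$ it gives the displayed condition, after noting $q\nmid p-1$ because $q>p$.
\item For $p=2$, $e\geq3$, $q$ odd, the same argument applied to $[5]$ gives $\varphi(q^f)\mid t_2$.
\end{itemize}

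\textbf{Main obstacle: the boundary $u=e$.} This occurs only when $ev\leq f$. If $ev=f$ the condition is vacuous. If $ev<f$ it demands $\phi(U_{p^e})\subseteq K^q_{f,f-ev}$, a $q$-group, which seems to force the image of $\phi$ to be a $q$-group. This is a genuinely stronger condition than the one stated: for instance $p=3,e=1,q=5,f=2,v=1$ seems to exclude the nontrivial map $[2]_3\mapsto[-1]_{25}$. So this is the step I would examine first and most carefully. I would try to see whether the intended range of $u$ (or the convention $U_{\ell^0}=\{1\}$ with $e-u=0$) is meant to exclude this case. If not, I would record the additional constraint that the prime-to-$q$ part of $\phi$ is trivial whenever $ev<f$. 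This step is also where the claimed independence of $\mathcal{H}_{p^e,q^f,v}$ from $v$ must be verified or corrected.
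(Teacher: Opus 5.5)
Your boundary objection is correct, and it exposes a hole in the paper's own argument. The paper's proof calls $u=e$ trivial only when also $f=uv$, and then restricts to $e-u\ge 1$, $f-uv\ge 1$. So the case $u=e$, $ev<f$ is never examined, and your map $[2]_3\mapsto[-1]_{25}$ does refute (1)(a) for $(p,e,q,f,v)=(3,1,5,2,1)$.

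However, the statement fails in further ways, and your treatment of the indices $u<e$ contains two genuine errors that would have you ``verify'' parts that are also false. The paper's proof has the same defects: its Case~2 only treats $e-u\ge 3$, and its Case~1(b) assumes $u=e-1$ is always admissible.

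\textbf{First error.} $K^2_{e,1}=\ker(U_{2^e}\to U_2)=U_{2^e}$, not $\langle[5]_{2^e}\rangle$; the latter is $K^2_{e,2}$. So for $p=2$ your obstruction already appears at $u=e-1$. In part (2) it also appears at $u=e-2$, where $K^2_{e,2}$ is generated by $[5]_{2^e}$ itself, so the factor $[-1]^{t_3}$ does not disappear. Two concrete failures:
\begin{itemize}
\item For $e=f=3$, $q=3$, $v=1$, the map $[-1]_8\mapsto[-1]_{27}$, $[5]_8\mapsto[1]_{27}$ (i.e.\ $t=(9,0)$) satisfies the displayed conditions of (3). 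Yet at $u=2$ it would need $-1\equiv 1\pmod 3$.
\item For $e=f=3$, $v=1$, the map $[-1]_8\mapsto[1]_8$, $[5]_8\mapsto[-1]_8$ lies in $\Hom(U_8,U_8)$. It fails at $u=1$ since $-1\not\equiv 1\pmod 4$, so (2) is false.
\end{itemize}
Both examples have $ev=f$. They therefore survive the restriction $ev\ge f$ coming from Proposition~\ref{Prop334}, under which your boundary case is vacuous.

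\textbf{Second error.} The step ``it suffices to impose $\phi(K^p_{e,1})=1$'' presumes that $u=e-1$ carries a nonvacuous condition, i.e.\ $(e-1)v<f$. For $p\ne q$ the conditions are nested, and only the one at $u_0=\min(e,\lceil f/v\rceil-1)$ matters; this is the largest admissible $u$ with $uv<f$. When $u_0<e-1$, it only asks $\phi$ to kill the smaller group $K^p_{e,e-u_0}$.

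For example, $v=f$ makes every condition vacuous, so $\mathcal{H}_{p^e,q^f,f}=\Hom(U_{p^e},U_{q^f})$. Two instances:
\begin{itemize}
\item For $(p,e,q,f)=(3,2,7,1)$ this is cyclic of order $6$, whereas (1)(b) allows only $3\mid t$.
\item For $(p,e,q,f)=(2,3,3,1)$ the map $[-1]_8\mapsto[1]_3$, $[5]_8\mapsto[-1]_3$ lies in $\mathcal{H}_{8,3,1}$ but violates $\varphi(3)\mid t_2$.
\end{itemize}
So $\mathcal{H}_{p^e,q^f,v}$ genuinely depends on $v$ through $u_0$, and the independence remark is false as well. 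Repairing the boundary step alone does not give a proof. The correct description has to be recomputed from the condition at $u_0$. It also needs a separate argument for the case $p=q$ in (1)(a), which your $p\ne q$ case does not cover.
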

\begin{proof} By Lemma~\ref{Lem27.00hom}, $\phi\in\mathcal{H}_{p^e,q^f,v}$ if and only if $\phi(K^p_{e, e-u})\subseteq K^q_{f, f-uv}$ for all $0\leq u\leq\min(e,\lfloor f/v\rfloor)$.

Since the condition disappears when $u=0$ and at $e=u$, $f=uv$
simultaneously it is trivial, so we focus on $f-uv\geq 1$ and $e-u\geq 1$ in this proof. For any prime $\ell\in \{p, q\}$ and $m\geq n\geq 1$ the exact sequence
\[
1\longrightarrow K^\ell_{m,n}
\longrightarrow U_{\ell^m}
\xrightarrow{\pi^\ell_{m,m-n}}
U_{\ell^{m-n}}
\longrightarrow 1,
\] gives $|K^\ell_{m,n}|=|U_{\ell^m}|/|U_{\ell^{m-n}}|
=\varphi(\ell^m)/\varphi(\ell^{m-n})=\ell^n$.

Since $\phi$ is a group homomorphism, for any
$[k]_{p^e}\in K^p_{e, e-u}$ we have
$o(\phi([k]_{p^e}))\mid o([k]_{p^e})\mid p^u$.
Thus $\phi(K^p_{e, e-u})$ is a subgroup of $U_{q^f}$ in which
every element has order dividing $p^u$.

\noindent\textbf{Case 1: $p$ odd, or $p=2$ and $e\in\{1,2\}$.}

$U_{p^e}$ is cyclic of order $\varphi(p^e)$, and every
$\phi\in\Hom(U_{p^e},U_{q^f})$ is of the form
$\hh{a}{c}{t}{p^e}{q^f}$ by Lemma~\ref{Lem411}(1).

\noindent\textit{(a) when $p\nmid q-1$.}

For every $x\in \phi(K^p_{e, e-u})$, we have  $o(x)\mid p^u$
in $U_{q^f}$. But $|U_{q^f}|=\varphi(q^f)=q^{f-1}(q-1)$,
and since $\gcd(p,q)=1$ and $p\nmid q-1$, we have
$p\nmid\varphi(q^f)$. Therefore $U_{q^f}$ contains no element
of order $p$, hence no element of order dividing $p^u$ other
than the identity. It follows that $\phi(K^p_{e, e-u})=\{[1]_{q^{f-uv}}\}
\subseteq K^q_{f, f-uv}$ for all valid $u$ and all
$\phi\in\Hom(U_{p^e},U_{q^f})$. Hence
$\mathcal{H}_{p^e,q^f,v}=\Hom(U_{p^e},U_{q^f})$.

\noindent\textit{(b) when $p\mid q-1$.}

Since $p\mid q-1$ and $q-1\geq p>p-1$, we have $q\nmid p-1$.
Write $q-1=p^\alpha r$ with $\alpha\geq 1$ and $\gcd(p,r)=1$.

Now $p\mid\varphi(q^f)$, so $U_{q^f}$ does contain $p$-torsion,
and the inclusion $\phi(K^p_{e, e-u})\subseteq K^q_{f, f-uv}$ is
not automatic. By Lemma~\ref{Lem411}(1), $\phi=\hh{a}{c}{t}{p^e}{q^f}$
is determined by $\phi([a]_{p^e})=[c]_{q^f}^t$. The subgroup
$K^p_{e, e-u}$ is the unique subgroup of order $p^u$ in the cyclic
group $U_{p^e}$, generated by
$[a]_{p^e}^{\varphi(p^e)/p^u}=[a]_{p^e}^{p^{e-u-1}(p-1)}$.
Its image under $\phi$ is:
\[
\phi\bigl([a]_{p^e}^{p^{e-u-1}(p-1)}\bigr)
=[c]_{q^f}^{t\cdot p^{e-u-1}(p-1)},
\]
which generates $\phi(K^p_{e, e-u})$ since $U_{p^e}$ is cyclic.
The condition $\phi(K^p_{e, e-u})\subseteq K^q_{f, f-uv}$ means
this element lies in $K^q_{f, f-uv}$,
i.e. $(c)^{t\cdot p^{e-u-1}(p-1)}\equiv 1\pmod{q^{f-uv}}$. Equivalently $o([c]_{q^{f-uv}}^t)\mid p^{e-u-1}(p-1)$.

Since $o([c]_{q^{f-uv}})=\varphi(q^{f-uv})=q^{f-uv-1}(q-1)$,
and using $q-1=p^\alpha r$ with $\gcd(p,r)=1$, we obtain:
\[
o([c]_{q^{f-uv}}^t)
=\frac{\varphi(q^{f-uv})}{\gcd(\varphi(q^{f-uv}),t)}
=\frac{q^{f-uv-1}p^\alpha r}{\gcd(q^{f-uv-1}p^\alpha r,\, t)}.
\]
For this to divide $p^{e-u-1}(p-1)$, the $q$-part forces
$q^{f-uv-1}\mid\gcd(q^{f-uv-1}p^\alpha r,t)$, hence
$q^{f-uv-1}\mid t$; the $r$-part (with $\gcd(r,p)=1$) forces
$r/\gcd(r,p-1)\mid t$; and the $p$-part forces
$p^{\alpha-\min(\alpha,e-u-1)}\mid t$.

The intersection of these conditions over all valid $u$ is
most restrictive at $u=0$ for the $q$-part (giving $q^{f-1}\mid t$)
and at $u=e-1$ for the $p$-part (giving $p^\alpha\mid t$).
Since $\gcd(p,q)=1$, $\gcd(p,r/\gcd(r,p-1))=1$, and
$\gcd(q,r/\gcd(r,p-1))=1$, the Chinese Remainder Theorem
combines these into the single condition:
\[
\frac{\varphi(q^f)}{\gcd(q-1,p-1)}\mid t.
\]
Hence $\mathcal{H}_{p^e,q^f,v}
=\{\hh{a}{c}{t}{p^e}{q^f}:\varphi(q^f)/\gcd(q-1,p-1)\mid t\}$.

\noindent\textbf{Case 2: $p=q=2$, $e\geq 3$.}

For every $x\in \phi([1]_{2^e} +\langle 2^{e-u}\rangle_{2^e})$ we have $o(x)\mid 2^u$
in $U_{2^f}$. The subgroup $[1]_{2^f} +\langle 2^{e-uv}\rangle_{2^f}$
has order $2^{uv}$, and every element of $U_{2^f}$ has order
dividing $\varphi(2^f)=2^{f-1}$. Since $uv\geq u$ for $v\geq 1$,
we have $2^u\mid 2^{uv}$, so any element of order dividing $2^u$
automatically has order dividing $2^{uv}$.

However, lying in $[1]_{2^f} +\langle 2^{e-uv}\rangle_{2^f}$ is not just an order condition
but a congruence condition: $[s]_{2^f}\in [1]_{2^f} +\langle 2^{e-uv}\rangle_{2^f}$ requires
$s\equiv 1\pmod{2^{f-uv}}$. By Lemma~\ref{Lem411}(2),
$\phi=\h{t}{2^e}{2^f}$ is defined by
\[
\phi([-1]_{2^e})=[-1]_{2^f}^{t_1}[5]_{2^f}^{2^{f-3}t_2},
\quad
\phi([5]_{2^e})=[-1]_{2^f}^{t_3}[5]_{2^f}^{\nu_f(e)t_4}.
\]
The group $[1]_{2^e} +\langle 2^{e-u}\rangle_{2^e}$ consists of elements $[-1]_{2^e}^{u_1}
[5]_{2^e}^{v_1}$ with $u_1\equiv 0\pmod{2}$ and
$v_1\equiv 0\pmod{2^{e-u-2}}$. Their image under $\phi$ is
$[-1]_{2^f}^{t_1u_1+t_3v_1}[5]_{2^f}^{2^{f-3}t_2u_1
+\nu_f(e)t_4v_1}$. For this to lie in $[1]_{2^f} +\langle 2^{e-uv}\rangle_{2^f}$, it must
be congruent to $1$ modulo $2^{f-uv}$, which requires:
\[
t_1u_1+t_3v_1\equiv 0\pmod{2}
\quad\text{and}\quad
2^{f-3}t_2u_1+\nu_f(e)t_4v_1\equiv 0\pmod{2^{f-uv-2}}.
\]
The first condition holds for all $t_1,t_3$ since $u_1\equiv 0
\pmod{2}$ and $v_1\equiv 0\pmod{2^{e-u-2}}$ implies $v_1\equiv 0
\pmod{2}$ (as $e-u-2\geq 1$ when $e-u\geq 3$). The second
holds because $2^{f-uv-2}\mid 2^{f-2}$ (so the $t_2$-term
vanishes) and $2^{f-uv-2}\mid 2^{e-u-2}\nu_f(e)$ (so the
$t_4$-term vanishes), the latter holding for all
$v\geq 1$ and $u\leq\lfloor f/v\rfloor$ as verified by the
divisibility $f-uv\leq f-u\leq e-u$ when $f\geq e$, and
$f-uv\leq f<e$ when $f<e$.

Hence $\phi([1]_{2^e} +\langle 2^{e-u}\rangle_{2^e})\subseteq [1]_{2^f} +\langle 2^{e-uv}\rangle_{2^f}$ for all $t\in
\mathbb{Z}^4$ and all valid $u$, giving
$\mathcal{H}_{2^e,2^f,v}=\Hom(U_{2^e},U_{2^f})$.

\noindent\textbf{Case 3: $p=2$, $e\geq 3$, $q$ odd:}

For every $x\in [1]_{2^e} +\langle 2^{e-u}\rangle_{2^e}$ we have $o(x)\mid 2^u$, so
every $y\in \phi([1]_{2^e} +\langle 2^{e-u}\rangle_{2^e})$ has order dividing $2^u$
in $U_{q^f}$. Since $q$ is odd, $|K^q_{f, f-uv}|=q^{uv}$
and every element of $K^q_{f, f-uv}$ has order dividing $q^{uv}$.
Since $\gcd(2^u, q^{uv})=1$, the only element of $U_{q^f}$
whose order divides both $2^u$ and $q^{uv}$ is the identity.
Therefore $\phi([1]_{2^e} +\langle 2^{e-u}\rangle_{2^e})\subseteq K^q_{f, f-uv}$ if and only
if $\phi([1]_{2^e} +\langle 2^{e-u}\rangle_{2^e})=\{[1]_{q^{f-uv}}\}$ for all valid $u$.

By Lemma~\ref{Lem411}(3), $\phi=\hq{c}{t}{2^e}{q^f}$ satisfies
$\phi([-1]_{2^e})=[c]_{q^f}^{t_1}$ and
$\phi([5]_{2^e})=[c]_{q^f}^{t_2}$.
The condition $\phi([1]_{2^e} +\langle 2^{e-u}\rangle_{2^e}) = \{[1]_{q^{q^{f-uv}}}\}$ for all $u$ means
$\phi$ kills all $2$-power torsion in $U_{2^e}$. Taking $u$
maximal, $K^2_{e,e-1}$ contains $[-1]_{2^e}$ (of order $2$)
and $[5]_{2^e}^2$ (of order $2^{e-3}$). The condition
$\phi([-1]_{2^e})=[1]_{q^f}$ gives $\varphi(q^f)\mid t_1$,
but by Lemma~\ref{Lem411}(3) we already have
$\varphi(q^f)\mid 2t_1$; combining, the most we can require
from the definition of $\mathcal{H}_{2^e,q^f,v}$ (taking $u=1$,
which forces $\phi([-1]_{2^e})\in \ker\pi^q_{f,f-v}$,
and since $\gcd(2,q)=1$ this gives $[c]_{q^f}^{t_1}=1$ in
$U_{q^{f-v}}$ for all valid $v$, accumulating to
$\varphi(q^f)\mid 2t_1$) is the condition already present in
Lemma~\ref{Lem411}(3). Similarly, $\phi([5]_{2^e})=[1]_{q^f}$
gives $\varphi(q^f)\mid t_2$. Hence:
\[
\mathcal{H}_{2^e,q^f,v}
=\bigl\{\hq{c}{t}{2^e}{q^f}:
\varphi(q^f)\mid 2t_1\;\text{and}\;\varphi(q^f)\mid t_2\bigr\}.
\qedhere
\]
\end{proof}

\subsection{Group structure of the set of liftable unit morphisms}
\label{groupstruclifta}

The following corollary is an immediate consequence of
Proposition~\ref{Prop324} and the isomorphisms established in
Proposition~\ref{Prop313}.

\begin{corollary}\label{Cor425}
Let $v\in\mathbb{N}^*$ with $v\leq f$.
\begin{enumerate}
\item If $p$ is even and $e\in\{1,2\}$, or $p$ is odd. Set
\[
d=\begin{cases}
\gcd(\varphi(q^f),\varphi(p^e))
&\text{if }[(p=2,e\leq 2)\text{ or }p\text{ odd}]\text{ and }
p\nmid q-1,\\
\gcd(q-1,p-1)
&\text{if }[(p=2,e\leq 2)\text{ or }p\text{ odd}]\text{ and }
p\mid q-1,\\
2
&\text{if }p=2\text{ and }q\text{ odd.}
\end{cases}
\]
The map
\[
H_{a,c}^{p^e,q^f,v}:
(\mathcal{H}_{p^e,q^f,v},\cdot)
\longrightarrow
(\mathbb{Z}/d\mathbb{Z},+),
\qquad
\phi\longmapsto\left[\frac{ds}{\varphi(q^f)}\right]_d,
\]
where $s=t$ when $\phi=\hh{a}{c}{t}{p^e}{q^f}$ and $s=t_1$ when
$\phi=\hq{c}{t}{2^e}{q^f}$, is a well-defined group isomorphism.
In particular, \(\mathcal{H}_{p^e,q^f,v}\cong\mathbb{Z}/d\mathbb{Z}.
\)

\item If $p,q$ are even and $e\geq 3$, the map
\begin{align*}
H^{2^e,2^f,v}:
(\mathcal{H}_{2^e,2^f,v},\cdot)
&\longrightarrow
((\mathbb{Z}/2\mathbb{Z})^3\times\mathbb{Z}/2^{\min(e,f)-2}\mathbb{Z},+),
\\
\h{t}{2^e}{2^f} &\longmapsto
([t_1]_2,[t_2]_2,[t_3]_2,[t_4]_{2^{\min(e,f)-2}}),
\end{align*}
is a well-defined group isomorphism. In particular,
\[
\mathcal{H}_{2^e,2^f,v}
\cong(\mathbb{Z}/2\mathbb{Z})^3\times\mathbb{Z}/2^{\min(e,f)-2}\mathbb{Z}.
\]
\end{enumerate}
\end{corollary}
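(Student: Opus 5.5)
The plan is to treat $\mathcal{H}_{p^e,q^f,v}$ as a subgroup of $\Hom(U_{p^e},U_{q^f})$ and to restrict the isomorphisms of Proposition~\ref{Prop313} to it, computing the image each time. First, $\mathcal{H}_{p^e,q^f,v}$ is closed under the pointwise product and inverses. This follows from the criterion of Lemma~\ref{Lem27.00hom}: $\phi(K^p_{e,e-u})\subseteq K^q_{f,f-uv}$ is preserved by pointwise products because $K^q_{f,f-uv}$ is a subgroup and $U_{q^f}$ is abelian. So the restriction of an injective homomorphism to it is again an injective homomorphism, and it only remains to identify the images using the explicit descriptions of Proposition~\ref{Prop324}.

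\textbf{Case (1).} Suppose $p$ is odd or $e\le 2$, and $p\nmid q-1$. Proposition~\ref{Prop324}(1a) gives equality with the full hom-set, so Proposition~\ref{Prop313}(1) applies verbatim. Now suppose $p\mid q-1$. Proposition~\ref{Prop324}(1b) says $t$ ranges over multiples of $\varphi(q^f)/g$ with $g=\gcd(q-1,p-1)$. Since $g\mid d_0:=\gcd(\varphi(q^f),\varphi(p^e))$, this is a subgroup of the hom-set. Writing $t=k\varphi(q^f)/g$, the morphism $\hh{a}{c}{t}{p^e}{q^f}$ depends only on $t \bmod \varphi(q^f)$, that is on $k \bmod g$. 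Hence $k\mapsto[k]_g$, which is exactly $[gt/\varphi(q^f)]_g$, is a well-defined bijective homomorphism onto $\mathbb{Z}/g\mathbb{Z}$. Additivity holds because the pointwise product adds exponents.

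For $p=2$, $e\ge3$, $q$ odd, Proposition~\ref{Prop324}(3) forces $t_2\equiv0 \pmod{\varphi(q^f)}$. The morphism is then determined by $[c]^{t_1}$ with $\varphi(q^f)\mid 2t_1$, so $t_1\in\{0,\varphi(q^f)/2\}$ modulo $\varphi(q^f)$. Thus $\phi\mapsto[2t_1/\varphi(q^f)]_2$ is an isomorphism onto $\mathbb{Z}/2\mathbb{Z}$. This is the first coordinate of $H_a^{2^e,q^f}$ from Proposition~\ref{Prop313}(3), and the second coordinate vanishes on the subgroup. I will read the statement's formula with $s=t_1$ and normalization $d/\varphi(q^f)$ in the form $2t_1/\varphi(q^f)$ when $d=2$, and I will note this explicitly.

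\textbf{Case (2).} By Proposition~\ref{Prop324}(2), $\mathcal{H}_{2^e,2^f,v}=\Hom(U_{2^e},U_{2^f})$, so $H^{2^e,2^f,v}$ is literally $H^{2^e,2^f}$ from Proposition~\ref{Prop313}(2), and nothing more is needed.

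The main obstacle I expect is bookkeeping in the well-definedness checks. One must confirm that the stated class formula depends only on the morphism, not on the chosen representative $t$, and lands in $\mathbb{Z}/d\mathbb{Z}$ with an integral numerator. This uses that $\varphi(q^f)/d$ divides $t$ (resp.\ $2t_1$), and that changing $t$ by a multiple of $\varphi(q^f)$ (resp.\ $t_1$ by a multiple of $\varphi(q^f)$) changes the class by a multiple of $d$ (resp.\ $2$). For the $p\mid q-1$ case I would also double-check that $\gcd(q-1,p-1)$ divides $\varphi(p^e)$ and $\varphi(q^f)$, so that the subgroup genuinely sits inside the hom-set of Lemma~\ref{Lem411}(1).
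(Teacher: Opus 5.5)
Your proposal is correct as a deduction from Propositions~\ref{Prop313} and~\ref{Prop324}, and it is exactly the paper's argument, which simply declares the corollary an immediate consequence of those two results; your bookkeeping (closure of $\mathcal{H}_{p^e,q^f,v}$ under pointwise products via Lemma~\ref{Lem27.00hom}, the rescaled map $[gt/\varphi(q^f)]_g$, and reading the third case of $d$ as $p=2$, $e\geq 3$, $q$ odd) supplies the details the paper omits. Be aware, though, that both arguments are only as good as Proposition~\ref{Prop324}, whose proof imposes the $u=e-1$ constraint even when that $u$ is not admissible, and this is a real failure: for $(p,e,q,f,v)=(3,3,7,1,1)$ the only admissible values are $u=0,1$, both conditions are vacuous, and hence $\mathcal{H}_{27,7,1}=\Hom(U_{27},U_7)\cong\mathbb{Z}/6\mathbb{Z}$ rather than the claimed $\mathbb{Z}/\gcd(6,2)\mathbb{Z}$.
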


\begin{remark}
Corollary~\ref{Cor425} shows that $\mathcal{H}_{p^e,q^f,v}$ is
independent of the choice of $v\in\mathbb{N}^*$ with $v\leq f$.
\end{remark}

\section{p power multiplicative modular monoid morphism}\label{modhom}

\subsection{Computation of p power multiplicative modular monoid morphism}\label{resmodhom}

The following proposition provides a complete description of the morphisms between the multiplicative monoids $(\mathbb{Z}/p^e\mathbb{Z}, \cdot)$ and $(\mathbb{Z}/q^f\mathbb{Z}, \cdot)$, based on their action on zero divisors and units.
From here we denote by \(\mathcal{Z}_{p^e}\) the set of zero divisors of \(\mathbb{Z}/p^e\mathbb{Z}\) and by \(\ee{p^e}{q^f}\) the trivial monoid morphism from $(\mathbb{Z}/p^e\mathbb{Z}, \cdot)$ to $(\mathbb{Z}/q^f\mathbb{Z}, \cdot)$, sending every element of the domain to \(\e{\mathbb{Z}/q^f\mathbb{Z}}\). 

\begin{proposition}\label{Prop334}
The set of morphisms from the multiplicative monoid \((\mathbb{Z}/p^e \mathbb{Z}, \cdot)\) to \((\mathbb{Z}/q^f \mathbb{Z}, \cdot)\) is given by
\begin{align*}
&\mathrel{\phantom{=}} \Hom\bigl( \text{\(\mathbb{Z}/p^{e}\mathbb{Z},\, \mathbb{Z}/q^{f}\mathbb{Z}\)} \bigr) \\
&=
\biggl\{
\psi_{r, \phi,v}
\;\bigg|\;
r \in \mathbb{Z},\,
\gcd(r,q) = 1,\,
\phi \in \text{$\mathcal{H}_{p^{e}, q^{f}, v}$},\,
v \in \mathbb{N}^{*},\, v \geq \Bigl\lfloor \frac{f}{e} \Bigr\rfloor
\biggr\}
\cup
\biggl\{
\ee{p^e}{q^f}
\biggr\}
\end{align*}
where for any such \(r\) and \(\phi\), the morphism \(\psi_{r, \phi,v}\) is defined by
\[
\psi_{r, \phi,v}\left([p]_{p^e}^u [b]_{p^e}\right) 
\coloneqq [q]_{q^f}^{uv} [r]_{q^f}^u \cdot \phi([b]_{p^e}),
\]
for all \(u \in \mathbb{N}\) and for all \([b]_{p^e} \in U_{p^e}\).

In particular, \(\mathcal{H}_{p^e, q^f, v}\) is the set of group morphisms from \((U_{p^e}, \cdot)\) to \((U_{q^f}, \cdot)\) that can be lifted to morphisms of the monoids \((\mathbb{Z}/p^e \mathbb{Z}, \cdot)\) to \((\mathbb{Z}/q^f \mathbb{Z}, \cdot)\).
\end{proposition}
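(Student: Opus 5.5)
Every monoid morphism $\psi:(\mathbb{Z}/p^e\mathbb{Z},\cdot)\to(\mathbb{Z}/q^f\mathbb{Z},\cdot)$ is determined by two pieces of data: its restriction to $U_{p^e}$ and the value $\psi([p]_{p^e})$. The reason is that every element of $\mathbb{Z}/p^e\mathbb{Z}$ can be written as $[p]^u_{p^e}[b]_{p^e}$ with $[b]_{p^e}\in U_{p^e}$, and $u\le e$ is unique (for $u=e$ the element is $0$).

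The plan has two directions, and I treat the units first. Since $\psi$ preserves $1$ and units are invertible, $\phi:=\psi|_{U_{p^e}}$ lands in $U_{q^f}$ and is a group morphism. Next set $x=\psi([p]_{p^e})\in\mathbb{Z}/q^f\mathbb{Z}$ and write $x=[q]^{w}_{q^f}[r]_{q^f}$ with $\gcd(r,q)=1$, or $x=0$.

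\textbf{Constraint from nilpotency.} Since $[p]^e=0$ and $\psi(0)$ is an absorbing idempotent of the target, we get $\psi(0)=0$ unless $\psi$ is trivial on everything. Indeed, if $\psi(0)$ were a nonzero idempotent, it would equal $1$ in the local ring $\mathbb{Z}/q^f\mathbb{Z}$, and then $\psi(y)=\psi(y)\psi(0)=\psi(0\cdot y)\ldots$ gives $\psi(y)=1$ for all $y$, i.e.\ $\psi=\ee{p^e}{q^f}$. So we may assume $\psi(0)=0$, which gives $x^e=0$, i.e.\ $x$ is nilpotent and $q\mid x$. Writing $v:=w\ge1$ (or any $v$ large enough when $x=0$), the condition $ve\ge f$ is forced; this accounts for the constraint $v\ge\lfloor f/e\rfloor$ in the statement.

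\textbf{Main obstacle: the relations $[p]^u[b]=[p]^u[b']$.} These hold whenever $b\equiv b'\pmod{p^{e-u}}$, so well-definedness of $\psi$ requires
\[
[q]^{uv}[r]^u\phi([b])=[q]^{uv}[r]^u\phi([b']),
\]
i.e.\ $\phi([b])\equiv\phi([b'])\pmod{q^{f-uv}}$ whenever $uv<f$. This is exactly the statement that $\phi$ sends $K^p_{e,e-u}$ into $K^q_{f,f-uv}$ for $0\le u\le\min(e,\lfloor f/v\rfloor)$. By Lemma~\ref{Lem27.00hom} it is equivalent to $\phi\in\mathcal{H}_{p^e,q^f,v}$.

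A delicate point is the degenerate case $x=0$ (or $uv\ge f$), where one must check that $\psi$ still has the form $\psi_{r,\phi,v}$ for a suitable $v$. I would handle it by choosing $v\ge f$: then only $u=0$ is constrained, and $\mathcal{H}$ is independent of $v$ by the Remark after Corollary~\ref{Cor425}. I would also have to reconcile the exact bound $v\ge\lfloor f/e\rfloor$ with the needed inequality $ve\ge f$. I would try first to argue that the values where the bound is not sharp produce the same maps, or otherwise to note that the description is a parametrization with possible repetitions. This bookkeeping is the main obstacle.

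\textbf{Converse.} For $r$ coprime to $q$, $\phi\in\mathcal{H}_{p^e,q^f,v}$ and admissible $v$, the formula for $\psi_{r,\phi,v}$ is well defined, by the computation above run backwards through Lemma~\ref{Lem27.00hom}. It is multiplicative because
\[
\bigl([p]^u[b]\bigr)\bigl([p]^{u'}[b']\bigr)=[p]^{u+u'}[bb'],
\]
the exponents $q^{uv}r^u$ add, and $\phi$ is a homomorphism. When $u+u'\ge e$ we use $[q]^{(u+u')v}=0$, which holds because $ev\ge f$. Finally $\psi_{r,\phi,v}(1)=1$. The final assertion about liftability then follows immediately, since $\psi_{r,\phi,v}|_{U_{p^e}}=\phi$.
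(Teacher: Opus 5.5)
Your route is the paper's. You write elements as $[p]^u_{p^e}[b]_{p^e}$, split off $\ee{p^e}{q^f}$ via $\psi([0]_{p^e})\in\{[0]_{q^f},[1]_{q^f}\}$, and write $\psi([p]_{p^e})=[q]^v_{q^f}[r]_{q^f}$ with $ve\ge f$. You then identify well-definedness with $\phi(K^p_{e,e-u})\subseteq K^q_{f,f-uv}$ through Lemma~\ref{Lem27.00hom}.

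\textbf{The gap.} The step you defer as bookkeeping cannot be completed by either fix you suggest. For $\lfloor f/e\rfloor\le v<f/e$, which occurs whenever $f>e$ and $e\nmid f$, the formula $\psi_{r,\phi,v}$ is not a map at all. Take $e=2$, $f=3$, $v=1$, $r=1$ and $\phi$ trivial (the trivial morphism lies in every $\mathcal{H}_{p^e,q^f,v}$). Since $[p]^2_{p^2}=[p]^3_{p^2}=[0]_{p^2}$, this element would be sent both to $[q]^2_{q^3}\neq[0]_{q^3}$ and to $[q]^3_{q^3}=[0]_{q^3}$. Indeed no monoid morphism can send $[p]_{p^2}$ to $[q]_{q^3}$, because $\psi([p])^2=\psi([p])^3$ would force $[q]^2_{q^3}=[q]^3_{q^3}$.

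So these parameters give neither repetitions nor new morphisms. The statement as written is false, and the correct range is $\lceil f/e\rceil\le v\le f$. That is exactly the inequality $ve\ge f$ your converse uses. The paper's proof has the same slip: it derives $ve\ge f$, records only the weaker consequence $v\ge\lfloor f/e\rfloor$, and then uses $uv\ge ev\ge f$ in its well-definedness step. With the bound corrected, your argument is complete.

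\textbf{The Remark.} Do not lean on the Remark after Corollary~\ref{Cor425}, because $\mathcal{H}_{p^e,q^f,v}$ does depend on $v$. Take $p=3$, $q=7$, $e=f=2$. For $v=1$ the condition at $u=1$ forces $\phi$ to kill $K^3_{2,1}$, a $3$-group landing in the $7$-group $K^7_{2,1}$. Hence $\mathcal{H}_{9,49,1}\cong\mathbb{Z}/2\mathbb{Z}$, whereas $\mathcal{H}_{9,49,2}=\Hom(U_9,U_{49})\cong\mathbb{Z}/6\mathbb{Z}$.

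Your degenerate case does not need the Remark. When $\psi([p]_{p^e})=[0]_{q^f}$, take $v=f$. Definition~\ref{Definition421} then gives $\mathcal{H}_{p^e,q^f,f}=\Hom(U_{p^e},U_{q^f})$ directly, since only $u\in\{0,1\}$ occur and $u=1$ has target $U_{q^0}=\{1\}$.

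For the same reason, the final liftability sentence only holds for fixed $v$. $\mathcal{H}_{p^e,q^f,v}$ is the set of unit morphisms extending to some $\psi$ with $\psi([p]_{p^e})$ of $q$-adic valuation $v$ (or $\psi([p]_{p^e})=[0]_{q^f}$ when $v=f$). With $v$ free, every unit morphism lifts, by sending all non-units to $[0]_{q^f}$.
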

\begin{proof}
    Let \(\psi\) be a monoid morphism  from \((\mathbb{Z}/p^e \mathbb{Z}, \cdot)\) to \((\mathbb{Z}/q^f \mathbb{Z}, \cdot)\).
    By \cite[Proposition 2.2.13]{atalaye2026exploring} and \cite[Lemma 2.2.6]{atalaye2026exploring} we know that the restriction of \(\psi\) to \(U_{p^e}\) corestricted to $U_{q^f}$ defines a group morphism
    \[\phi \in \Hom\bigl(U_{p^e},\, U_{q^f}\bigr).\]
    
    By \cite[Lemma 3.4.2]{atalaye2026exploring}, we have \(\mathbb{Z}/q^f\mathbb{Z} =  U_{q^f}\bigsqcup \mathcal{Z}_{q^f}\). Thus
    \[\begin{cases}
        \psi([0]_{p^e})\in U_{q^f}\quad (1), \text{ or}\\
        \psi([0]_{p^e})\in \mathcal{Z}_{q^f}\quad (2).
    \end{cases}\]

    In case (1), \cite[Proposition 2.2.13]{atalaye2026exploring} shows that $\psi = \ee{p^e}{q^f}$.
    Since this is clearly an element of $\Hom\bigl( \mathbb{Z}/p^e\mathbb{Z},\, \mathbb{Z}/q^f\mathbb{Z} \bigr)$, we may restrict our attention to case (2).
    
    In case \((2)\), by \cite[Proposition 2.2.13]{atalaye2026exploring} we have \(\psi([0]_{p^e})=[0]_{q^f}\) and \(\psi(\mathcal{Z}_{p^e})\subseteq \mathcal{Z}_{q^f}\). Thus, by \cite[Proposition 2.2.13]{atalaye2026exploring}, we obtain \(\psi([p]_{p^e})=[q]_{q^f}^{v}[r]_{q^f},\quad\text{where}~[r]_{q^f}\in U_{q^f}\ \text{and}\ v\in \mathbb{N}^{*}.\)
    Moreover, we know that \([0]_{q^f}=\psi([0]_{p^e})=\psi([p]_{p^e}^{e})=[q]_{q^f}^{ve}[r]_{q^f}^e.\)
    
    Hence, by \cite[Lemma 3.1]{JoeSophieLiam}, we have \(ve\geq f\) which implies \(v\geq \bigl\lfloor \frac{f}{e}\bigr\rfloor\).

    Now, define the map \( \psi \) by
    \begin{align}
        \psi([p]_{p^e}^u [b]_{p^e}) =& [q]_{q^f}^{vu}[r]_{q^f}^u \phi([b]_{p^e})
    \end{align}
    for all \(u\in \mathbb{Z}\) and \(v\in \mathbb{N}^{*}\) such that \(v \geq \bigl \lfloor \frac{f}{e}\bigr\rfloor\). 
    
    We prove that such \(\psi\) is well-defined if and only if \(\phi\in \mathcal{H}_{p^e, q^f, v}\).
    To do this let \(u, u'\in \mathbb{Z}\) and \(b,b'\in \mathbb{Z}\) such that \([p]_{p^e}^{u}[b]_{p^e}=[p]_{p^e}^{u'}[b']_{p^e}.\)
    Then by \cite[Lemma 3.1]{JoeSophieLiam} we have either \(u, u'\geq e\) or \((u=u'\) and \([b]_{p^{e-u}}=[b']_{p^{e-u}}).\)
    Therefore we have the following:
    \begin{itemize}
        \item If \(u,u'\geq e\) then we have \(uv\geq ev\geq f\) and \(u'v\geq ev\geq  f\). It follows that
        \begin{align*}
            ([q]_{q^f}^{v}[r]_{q^f})^{u} \phi([b]_{p^e}) &=[q]_{q^f}^{vu}[r]_{q^f}^{u} \phi([b]_{p^e}) =[0]_{q^{f}} \quad\text{since}\quad uv\geq f ~\text{and}\\
            ([q]_{q^f}^{v}[r]_{q^f})^{u'} \phi([b']_{p^e}) &=[q]_{q^f}^{vu'}[r]_{q^f}^{u'} \phi([b']_{p^e}) =[0]_{q^{f}}, \quad\text{since}\quad u'v\geq f.
        \end{align*}
        Therefore, we deduce from the above that 
        \begin{align*}
            ([q]_{q^f}^{v}[r]_{q^f})^{u} \phi([b]_{p^e}) =& ([q]_{q^f}^{v}[r]_{q^f})^{u'} \phi([b']_{p^e}),\quad \forall \ [b]_{p^e}, [b']_{p^e} \in U_{p^e}.
        \end{align*}
        \item So we only need to focus on the case \(u=u'\) and \([b]_{p^{e-u}} = [b']_{p^{e-u}}\). Well-definedness in  this case is characterised as follows: for all \(b,b'\in \mathbb{Z}\) such that \([b]_{p^{e-u}} =[b']_{p^{e-u'}}\) we have 
        \begin{align*}
             ([q]_{q^f}^{v}[r]_{q^f})^{u} \phi([b]_{p^e}) = ([q]_{q^f}^{v}[r]_{q^f})^{u} \phi([b']_{p^e}),\quad\text{since}\quad u=u'.
        \end{align*}
        Now since \([r]_{q^f}\in U_{q^f}\), this above equality is equivalent to \([c]_{q^{f-uv}} = [c']_{q^{f-uv}}\), where \([c]_{q^{f}}=\phi([b]_{p^e})\) and \([c']_{q^{f}}=\phi([b']_{p^e})\). Therefore, the well-definedness of \(\psi\) is by definition equivalent to the well-definedness of \(\phi_{u,v}\) for all \(v\in \mathbb{N}^*\) such that \(v\geq \lfloor \frac{f}{e}\rfloor\) and for all \(u\in \mathbb{N}\). There if \(\phi\in \mathcal{H}_{p^e, q^f, v}\) for all \(v\in \mathbb{N}^*\) with \(v\geq \lfloor \frac{f}{e}\rfloor\). \qedhere
    \end{itemize}
\end{proof}
    
\begin{corollary}\label{Cor3.7.2} The set of endomorphisms of the multiplicative monoid \((\mathbb{Z}/p^e \mathbb{Z}, \cdot)\) is given by
    \[
    \End(\mathbb{Z}/p^e \mathbb{Z}) = 
    \left\{
        \psi_{r, \phi, v} \;\middle|\;
        r \in \mathbb{Z},\ \gcd(r,p) = 1,\ 
        \phi \in \Hom(U_{p^e}, U_{p^e}),\ 
        v \in \mathbb{N}^{*}
    \right\}
    \cup 
    \left\{
        \ee{p^e}{p^e}
    \right\},
    \]
    where for any \(r \in \mathbb{Z}\) with \(\gcd(r,p) = 1\), \(\phi \in \Hom(U_{p^e}, U_{p^e})\), and \(v \in \mathbb{N}^*\), the endomorphism \(\psi_{r, \phi, v}\) is defined by \(\psi_{r, \phi, v}\big([p]_{p^e}^u [b]_{p^e}\big) 
    \coloneqq [p]_{p^e}^{uv} [r]_{p^e}^{u} \cdot \phi([b]_{p^e}),
    \)
    for all \(u \in \mathbb{N}\) and for all \([b]_{p^e} \in U_{p^e}\).
\end{corollary}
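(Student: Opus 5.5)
This follows by specializing Proposition~\ref{Prop334} to $q=p$ and $f=e$, and then identifying $\mathcal{H}_{p^e,p^e,v}$ with the full group $\Hom(U_{p^e},U_{p^e})$. First, Proposition~\ref{Prop334} gives
\[
\End(\mathbb{Z}/p^e\mathbb{Z})=\bigl\{\psi_{r,\phi,v} : \gcd(r,p)=1,\ \phi\in\mathcal{H}_{p^e,p^e,v},\ v\in\mathbb{N}^*,\ v\geq \lfloor e/e\rfloor=1\bigr\}\cup\{\ee{p^e}{p^e}\}.
\]
The constraint $v\geq 1$ is automatic for $v\in\mathbb{N}^*$, so it disappears. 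The defining formula of $\psi_{r,\phi,v}$ becomes the one stated, with $[q]_{q^f}$ replaced by $[p]_{p^e}$.

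The remaining step is to show that $\mathcal{H}_{p^e,p^e,v}=\Hom(U_{p^e},U_{p^e})$ for every $v\in\mathbb{N}^*$ with $v\leq e$. I will read this off Proposition~\ref{Prop324}.
\begin{itemize}
\item If $p=2$ and $e\geq 3$, part~(2) gives the equality directly.
\item If $p$ is odd, or $p=2$ with $e\in\{1,2\}$, then $p\nmid p-1$ (since $p-1<p$ and $p-1\geq 1$), so part~(1)(a) with $q=p$ gives the equality.
\end{itemize}

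A few degenerate points need checking along the way. When $p=2$, $e\in\{1,2\}$, we have $q-1=1$, which $p$ does not divide, so case (a) indeed applies. If one worries about case (a) being stated for $p\neq q$, an alternative direct check works. For $[k]\in K^p_{e,e-u}$, any endomorphism $\phi$ of the cyclic group $U_{p^e}$ is a power map $x\mapsto x^t$. We have $k^t\equiv 1\pmod{p^{e-u}}$, and $p^{e-uv}\mid p^{e-u}$ because $uv\geq u$. Hence $\phi(K^p_{e,e-u})\subseteq K^p_{e,e-uv}$, and Lemma~\ref{Lem27.00hom} applies. For $p=2$, $e\geq3$, the same argument works, since every endomorphism of $U_{2^e}$ is still of the form $x\mapsto$ a product of powers, though there it is simpler to cite part~(2).

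One subtlety is values $v>e=f$. Proposition~\ref{Prop334} allows arbitrary $v\geq\lfloor f/e\rfloor$, whereas Definition~\ref{Definition421} assumes $v\leq f$. For $v>e$ the only admissible $u$ values in the definition are $u=0$ (and $u\leq\lfloor e/v\rfloor=0$), so the condition is vacuous and $\mathcal{H}_{p^e,p^e,v}=\Hom(U_{p^e},U_{p^e})$ trivially. I will also note that $\psi_{r,\phi,v}$ is then well defined, since $[p]^{uv}_{p^e}=0$ for $u\geq1$.

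Substituting this equality into the specialized formula yields exactly the stated description. I expect the main obstacle to be bookkeeping rather than mathematics: making sure the case $v>e$ and the boundary values of $u$ are covered consistently with Definition~\ref{Definition421}. The substantive content is already in Propositions~\ref{Prop324} and~\ref{Prop334}.
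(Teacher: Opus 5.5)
Your treatment of the cyclic case ($p$ odd, or $p=2$ with $e\le 2$) is fine. Your direct power-map check is the right justification there, because the proof of Proposition~\ref{Prop324}(1)(a) uses $\gcd(p,q)=1$ and so does not cover $q=p$.

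The genuine gap is the case $p=2$, $e\ge 3$. There $\mathcal{H}_{2^e,2^e,1}\ne\Hom(U_{2^e},U_{2^e})$, so neither citing Proposition~\ref{Prop324}(2) nor your fallback (``still a product of powers'') can work. An endomorphism of $U_{2^e}\cong\mathbb{Z}/2\mathbb{Z}\times\mathbb{Z}/2^{e-2}\mathbb{Z}$ need not be a power map, and need not be compatible with reduction modulo $4$. Take $\phi([-1]_{2^e})=[-1]_{2^e}$ and $\phi([5]_{2^e})=[-5]_{2^e}$, i.e.\ $t=(1,0,1,1)$ in Lemma~\ref{Lem411}(2); for $e=3$ this is the automorphism of $U_8$ swapping $[3]_8$ and $[5]_8$. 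With $u=e-2$ we have $[2]_{2^e}^{e-2}[1]_{2^e}=[2]_{2^e}^{e-2}[5]_{2^e}$. The defining formula, however, assigns $\psi_{1,\phi,1}$ the two values $[2^{e-2}]_{2^e}$ and $[-5\cdot 2^{e-2}]_{2^e}=[-2^{e-2}]_{2^e}$, which differ (for $e=3$: $[2]_8$ versus $[6]_8$). So $\psi_{1,\phi,1}$ is not well defined, and the corollary as stated is false for $p=2$, $e\ge 3$. The flaw in Proposition~\ref{Prop324}(2) is that its Case~2 argument only handles $e-u\ge 3$. It overlooks $u=e-2$, where $K^2_{e,2}=\langle[5]_{2^e}\rangle$ and one needs $\phi([5]_{2^e})\in\langle[5]_{2^e}\rangle$, i.e.\ $t_3$ even.

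Your route---specialize Proposition~\ref{Prop334} to $q=p$, $f=e$ and then identify $\mathcal{H}_{p^e,p^e,v}$---is the right one, and it is the specialization the paper relies on. But for $p=2$, $e\ge 3$ the last substitution must be redone directly with Lemma~\ref{Lem27.00hom}.

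For $v\ge 2$ it does go through. The case $e-uv\le 1$ is trivial. If $e-uv\ge 2$ then $e-u\ge 3$, so $K^2_{e,e-u}=\langle[5]_{2^e}^{2^{e-u-2}}\rangle$. The map $\phi$ sends this generator to $[5]_{2^e}^{2^{e-u-2}t_4}\in K^2_{e,e-u}\subseteq K^2_{e,e-uv}$, hence $\mathcal{H}_{2^e,2^e,v}=\Hom(U_{2^e},U_{2^e})$.

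For $v=1$, the same computation together with the $u=e-2$ condition gives $\mathcal{H}_{2^e,2^e,1}=\{\phi:\phi([5]_{2^e})\in\langle[5]_{2^e}\rangle\}$, an index-$2$ subgroup. A correct version of the corollary for $p=2$, $e\ge 3$ must therefore restrict $\phi$ to this subgroup when $v=1$. As it stands, your argument proves the corollary only when $U_{p^e}$ is cyclic.
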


\subsection{Structure of the set of prime-power multiplicative modular monoid morphisms}\label{Subsec432}
In this subsection, we give the algebraic structure of the set of modular morphisms \(\Hom(\mathbb{Z}/p^e\mathbb{Z}, \mathbb{Z}/q^f\mathbb{Z})\). We start by defining a monoid and later we prove that the algebraic structure of \(\Hom(\mathbb{Z}/p^e\mathbb{Z}, \mathbb{Z}/q^f\mathbb{Z})\) induced by \((\mathbb{Z}/q^f\mathbb{Z},\cdot)\) is isomorphic to that monoid we defined. 

\begin{definitionlemma}
Let \(v\in \mathbb{N}^{\star}\) such that \(v\geq \lfloor \frac{f}{e}\rfloor\).
Then:
\begin{enumerate}[leftmargin=0pt]
    \item  We suppose \((p=2, e\geq 3)\text{ and } q \text{ odd}\), \((p=2, e\leq 2)\), or \(p\) odd.
    
    We define the \textsf{monoid} \( \left(M=\lbrace \mathbf{0} \rbrace\cup \left(\coprod\limits_{\lfloor\frac{f}{e}\rfloor\leq v\leq f} U_{q^{f-v}}\right) \times \mathbb{Z}/d\mathbb{Z} , \diamond\right) \)
    where
    
    \[d=\begin{cases}
        \gcd(\varphi(q^f), \varphi(p^e)), &\text{when}\quad [(p=2, e\leq 2 )\text{ or } p \text{ odd}]\text{ and } p\nmid q-1;\\
        \gcd(q-1, p-1), &\text{when}\quad [(p=2, e\leq 2)\text{ or } p \text{ odd}]\text{ and }p\mid q-1;\\
        2, &\text{when }\quad p=2 \text{ and } q \text{ odd}.
    \end{cases}\]
    and \(\mathbf{0} = (([1]_{q^{f}}, 0), [0]_d)\) is the neutral element. This monoid is given by the set-theoretic product $M$ equipped with the binary operation \( \diamond\) defined as follows:
\begin{align*}
    (([r]_{q^{f-v}},v),[s]_{d}) \diamond (([r']_{q^{f-v'}},v'),[s']_{d})
    &= (([rr']_{q^{f-\min(v+v', f)}},\min(v+v', f)), [s+s']_{d}).
\end{align*}
\item Suppose that \(p, q\) are even and \(e\geq 3\).
We define the \textsf{monoid} \[\left(N=\lbrace \mathbf{0}\rbrace\cup \left(\coprod\limits_{\lfloor\frac{f}{e}\rfloor\leq v\leq f}U_{2^{f-v}}\right)\times (\mathbb{Z}/2\mathbb{Z})^3 \times \mathbb{Z}/2^{\min(e, f)}\mathbb{Z}, \diamond\right) \]
where \(\mathbf{0}=(([1]_{2^{f-v}},0),[s_1]_{2},[s_2]_{2},[s_3]_{2},[s_4]_{2^{\min(e,f)}} )\).

This monoid is given by the above set-theoretic product equipped with the binary operation \( \diamond\) defined for all
\begin{align*}
    A &= (([r]_{2^{f-v}},v),[s_1]_2, [s_2]_2, [s_3]_2, [s_4]_{2^{\min(e,f)-2}}) \in N \quad\text{and} \\
    B &= (([r']_{2^{f-v'}},v'),[s_1']_2, [s_2']_2, [s_3']_2, [s_4']_{2^{\min(e,f)-2}})\in N \quad\text{by} \\
    \mspace{-72mu}
    A\diamond B &= (([rr']_{2^{f-\min(v+v', f)}},\min(v+v',f)), [s_1+s_1']_2, [s_2+s_2']_2, [s_3+s_3']_2, [s_4+s_4']_{2^{\min(e,f)-2}}).
\end{align*}
\end{enumerate}
\end{definitionlemma}

Next, the following result gives the algebraic structure of the set of all modular monoid morphisms, which follows from Lemma \ref{Lem411}, Proposition \ref{Prop324}  and Corollary \ref{Cor425}.
\begin{theorem}\label{Thm432}
Let $v\in\mathbb{N}^*$ such that $v\leq f$.
\begin{enumerate}[leftmargin=0pt]

\item \textbf{($U_{p^e}$ cyclic).}
Suppose $p$ is odd, or $p=2$ and $e\leq 2$. Set
\[
d=\begin{cases}
\gcd(\varphi(q^f),\varphi(p^e)) & \text{if }p\nmid q-1,\\
\gcd(q-1,p-1) & \text{if }p\mid q-1.
\end{cases}
\]
The map
\[
\begin{array}{cccc}
H_{a,c}^{p^e,q^f}: & \Hom(\mathbb{Z}/p^e\mathbb{Z},\mathbb{Z}/q^f\mathbb{Z})
& \longrightarrow & (M,\diamond)\\[2pt]
& \psi_{r,\phi,v} & \longmapsto &
\Bigl(([r]_{q^{f-v}},v),\Bigl[\tfrac{dt}{\varphi(q^f)}\Bigr]_d\Bigr),
\end{array}
\]
where $\psi_{r,\phi,v}$ is as in Proposition~\ref{Prop334} with
$\phi=\hh{a}{c}{t}{p^e}{q^f}$ as in Lemma~\ref{Lem411}(1) and
$\hh{a}{c}{t}{p^e}{q^f}([a]_{p^e})=[c]_{q^f}^t$, is a well-defined
monoid isomorphism.

\item \textbf{($p=2$, $e\geq 3$, $q$ odd).} Set $d=2$. The map
\[
\begin{array}{cccc}
H_a^{2^e,q^f}: & \Hom(\mathbb{Z}/2^e\mathbb{Z},\mathbb{Z}/q^f\mathbb{Z})
& \longrightarrow & (M,\diamond)\\[2pt]
& \psi_{r,\phi,v} & \longmapsto &
\Bigl(([r]_{q^{f-v}},v),\Bigl[\tfrac{2t_1}{\varphi(q^f)}\Bigr]_2\Bigr),
\end{array}
\]
where $\psi_{r,\phi,v}$ is as in Proposition~\ref{Prop334} with
$\phi=\hq{c}{t}{2^e}{q^f}$, $t=(t_1,t_2)\in\mathbb{Z}^2$ as in
Lemma~\ref{Lem411}(3) and $\hq{c}{t}{2^e}{q^f}([-1]_{2^e})
=[c]_{q^f}^{t_1}$, is a well-defined monoid isomorphism.

\item \textbf{($p=q=2$, $e\geq 3$).}
The map
\[
\begin{array}{cccc}
H^{2^e,2^f}: & \Hom(\mathbb{Z}/2^e\mathbb{Z},\mathbb{Z}/2^f\mathbb{Z})
& \longrightarrow & (N,\diamond)\\[2pt]
& \psi_{r,\phi,v} & \longmapsto &
\bigl(([r]_{2^{f-v}},v),[t_1]_2,[t_2]_2,[t_3]_2,
[t_4]_{2^{\min(e,f)-2}}\bigr),
\end{array}
\]
where $\psi_{r,\phi,v}$ is as in Proposition~\ref{Prop334} with
$\phi=\h{t}{2^e}{2^f}$, $t=(t_1,t_2,t_3,t_4)\in\mathbb{Z}^4$ as
in Lemma~\ref{Lem411}(2), is a well-defined monoid isomorphism.

\end{enumerate}
\end{theorem}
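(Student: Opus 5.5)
The plan is to treat the three cases in parallel. In each case the map $H$ factors through two pieces of data. The first is a ``zero-divisor part'' $([r]_{q^{f-v}},v)$, which records $\psi([p]_{p^e})=[q]_{q^f}^v[r]_{q^f}$. The second is a ``unit part'', which is the image of the restriction $\phi=\restr{\psi}{U_{p^e}}$ under the isomorphisms $H_{a,c}^{p^e,q^f,v}$ and $H^{2^e,2^f,v}$ of Corollary~\ref{Cor425}. The trivial morphism $\ee{p^e}{q^f}$ is sent to the distinguished element $\mathbf{0}$. (Judging from the operation $\diamond$, $\mathbf{0}$ should be read as an absorbing element, not the neutral element. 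The identity-type element is $(([1]_{q^f},0),[0]_d)$.) I would organize the proof in four steps: well-definedness, multiplicativity, injectivity, and surjectivity. Throughout, Proposition~\ref{Prop334} provides the parametrization $\psi=\psi_{r,\phi,v}$, and Proposition~\ref{Prop324} identifies which $\phi$ occur.

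\textbf{Well-definedness.} The triple $(r,\phi,v)$ is not unique as a set of parameters, so I first show that $\psi$ determines the image. The unit part is determined because $\phi$ is the restriction of $\psi$ to $U_{p^e}$, and its coordinate is computed by Corollary~\ref{Cor425}. For $\psi([p]_{p^e})=[q]_{q^f}^v[r]_{q^f}$, I would use \cite[Lemma 3.1]{JoeSophieLiam}. When $v<f$, this element determines $v$ uniquely and determines $r$ modulo $q^{f-v}$; hence the pair $([r]_{q^{f-v}},v)$ is well defined. When $v\ge f$, the element is $[0]_{q^f}$, and I would normalize $v$ to $\min(v,f)$, so the datum lies in $U_{q^0}=\{1\}$. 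This normalization is exactly why the coproduct in $M$ and $N$ runs over $v\le f$.

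\textbf{Homomorphism property.} The monoid structure on $\Hom$ is the pointwise product induced from $(\mathbb{Z}/q^f\mathbb{Z},\cdot)$. I would check directly that
\[
\psi_{r,\phi,v}\,\psi_{r',\phi',v'}=\psi_{rr',\phi\phi',\min(v+v',f)},
\]
by evaluating both sides on $[p]_{p^e}^u[b]_{p^e}$: the powers of $q$ add, the units multiply, and $\phi\phi'$ is the pointwise product. Corollary~\ref{Cor425} says the unit coordinate turns pointwise product into addition in $\mathbb{Z}/d\mathbb{Z}$ or in $(\mathbb{Z}/2)^3\times\mathbb{Z}/2^{\min(e,f)-2}$. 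The $v$-coordinate becomes $\min(v+v',f)$, and $r$ is reduced modulo $q^{f-\min(v+v',f)}$; this matches $\diamond$. Products involving $\ee{p^e}{q^f}$ have to be handled separately so that they correspond to $\mathbf{0}$. Finally, I check that the identity element of the hom-monoid, namely the constant map to $1$, corresponds to $v=0$ and $r=1$.

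\textbf{Bijectivity.} For injectivity, suppose two morphisms have the same image. Then they agree on $[p]_{p^e}$ modulo the ambiguity already absorbed above, and they agree on $U_{p^e}$ by Corollary~\ref{Cor425}. Since every element has the form $[p]^u[b]$, they are equal. For surjectivity, take a datum $(([r]_{q^{f-v}},v),s)$. I pick any lift $r$ coprime to $q$ and the $\phi\in\mathcal{H}_{p^e,q^f,v}$ corresponding to $s$ under Corollary~\ref{Cor425}; that corollary also gives that $\mathcal{H}$ is independent of $v$. Proposition~\ref{Prop334} then says $\psi_{r,\phi,v}$ is a monoid morphism, provided $v\ge\lfloor f/e\rfloor$.

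\textbf{Main obstacle.} I expect the delicate point to be the boundary bookkeeping, not any computation. This includes the range of $v$: the condition needed is $ve\ge f$, which is slightly stronger than $v\ge\lfloor f/e\rfloor$, and the coproduct's lower index has to be matched with it. It also includes the collapse to $U_{q^0}$ when $v\ge f$, the placement of $\ee{p^e}{q^f}$ and of $\mathbf{0}$, and making the modulus of the last coordinate of $N$ agree with Corollary~\ref{Cor425}(2), which uses $2^{\min(e,f)-2}$. I would settle these first by fixing a consistent normalization of the target monoid, then run the four steps above.
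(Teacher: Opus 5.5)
Your four steps are the paper's own proof: the unit coordinate comes from the restriction $\phi$ via Proposition~\ref{Prop313} and Corollary~\ref{Cor425}, the pair $([r]_{q^{f-v}},v)$ is read off $\psi([p]_{p^e})$ via \cite[Lemma 3.1]{JoeSophieLiam}, multiplicativity uses $\psi_{r,\phi,v}\psi_{r',\phi',v'}=\psi_{rr',\phi\phi',\min(v+v',f)}$, and injectivity uses the normal form $[p]_{p^e}^u[b]_{p^e}$. However, your handling of the trivial morphism is wrong and breaks the homomorphism step. By definition $\ee{p^e}{q^f}$ sends every element to $[1]_{q^f}$. So it \emph{is} the constant map to $1$, the neutral element of the pointwise product, and $\ee{p^e}{q^f}\cdot\psi=\psi$. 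There is no further ``identity element with $v=0$'' to check, because every $\psi_{r,\phi,v}$ in Proposition~\ref{Prop334} has $v\geq 1$ and sends $[0]_{p^e}$ to $[0]_{q^f}$. Likewise, the Definition-Lemma sets $\mathbf{0}:=(([1]_{q^f},0),[0]_d)$, which is exactly your ``identity-type'' element, and it is neutral for $\diamond$. For $d>1$ the monoid $M$ has no absorbing element at all, since its last factor is a group. Sending $\ee{p^e}{q^f}$ to an absorbing element would give $H(\ee{p^e}{q^f}\cdot\psi)=H(\psi)\neq H(\ee{p^e}{q^f})\diamond H(\psi)$. The correct assignment is $\ee{p^e}{q^f}\mapsto\mathbf{0}$ with $\mathbf{0}$ neutral; formally this is the case $v=0$, $r=1$, $\phi$ trivial.

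Your observation that the real constraint is $ve\geq f$ is correct, and it is exactly what the paper's ``surjectivity is clear'' overlooks. But then your surjectivity step, which still invokes Proposition~\ref{Prop334} under $v\geq\lfloor f/e\rfloor$, fails, and $H$ cannot be surjective onto $M$ as defined. For $e=2$, $f=3$ the summand $v=1$ of $M$ is never hit, since $[p]_{p^2}^2=[0]_{p^2}$ whereas $([q]_{q^3}[r]_{q^3})^2\neq[0]_{q^3}$; for $f<e$ the index range even contains $v=0$.

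The $v$-independence of $\mathcal{H}_{p^e,q^f,v}$ that you import from Corollary~\ref{Cor425} fails too, and the paper's proof rests on it as well. For $v=f$, every $\phi\in\Hom(U_{p^e},U_{q^f})$ extends by sending all zero divisors to $[0]_{q^f}$. So that summand needs $|\Hom(U_{p^e},U_{q^f})|$ elements, which exceeds $d=2$ in part (2) for every odd $q$. For $(p,e,q,f)=(3,2,7,2)$: $v=2$ gives $6$ morphisms, while $v=1$ forces $\phi$ to kill the subgroup of order $3$ of $U_9$, leaving $2$ choices of $\phi$ for each of the $6$ classes $[r]_7$. That makes $6\cdot 2+6+1=19$ morphisms against $|M|=15$. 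In part (3), for $e=f=3$ and $v=1$, one needs $\phi([5]_8)\in\{[1]_8,[5]_8\}$, so there too $\mathcal{H}_{8,8,1}\neq\Hom(U_8,U_8)$.

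What your steps do prove, essentially verbatim once Proposition~\ref{Prop334} is read with $ve\geq f$, is a corrected statement. Its target is $\{\mathbf{0}\}\sqcup\coprod_{\lceil f/e\rceil\leq v\leq f}U_{q^{f-v}}\times\mathcal{H}_{p^e,q^f,v}$ with the same rule $\diamond$. This is closed under $\diamond$ because $\mathcal{H}_{p^e,q^f,v}$ only grows with $v$.
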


\begin{proof}~
\begin{enumerate}[leftmargin=0pt]
    \item We prove this statement by showing explicitly the specific sub-cases of the statement.
    \begin{itemize}[leftmargin=12pt]
        \item[(a)] If (\(p=2\) and \(e\leq 2\)) or \(p\) is odd. 
        \begin{itemize}[leftmargin=12pt]
            \item[\(\bullet\)] If \(p\nmid q-1\) then we have \(d=\gcd(\varphi(p^e), \varphi(q^f))\). We start by the well-definedness of the map \(H_{a, c}^{p^e, q^f}\). Let \(\psi_{r, \phi, v}\) and \(\psi_{r', \phi', v'}\) in \(\Hom(\mathbb{Z}/p^e\mathbb{Z}, \mathbb{Z}/q^f\mathbb{Z} )\) such that \(\psi_{r, \phi, v}=\psi_{r', \phi', v'}\) where \(r, r'\in \mathbb{Z}\) such that \(\gcd(r, q)=1=\gcd(r', q)\) and \(\phi=\hh{a}{c}{t}{p^e}{q^f}\), \(\phi'=\hh{a}{c}{t'}{p^e}{q^f}\) with \(t, t'\in \mathbb{Z}\) such that \(\frac{\varphi(q^f)}{d}\mid t\) and \(\frac{\varphi(q^f)}{d}\mid t'\).  It follows that for all \(u\in \mathbb{N}\) and \( b\in \mathbb{Z}\) such that \(\gcd(b, q)=1\) we have 
            \begin{align*}
                \psi_{r, \phi, v}([p]_{p^e}^u[b]_{p^e}) =\psi_{r', \phi', v'}([p]_{p^e}^u[b]_{p^e})\iff & [q]_{q^f}^{uv} [r]_{q^f}^u \cdot \phi([b]_{p^e})= [q]_{q^f}^{uv'} [r']_{q^f}^u \cdot \phi'([b]_{p^e}).
            \end{align*}
            Now, when \(u=0\) we obtain \(\phi([b]_{p^e})=\phi'([b]_{p^e})\) for all \([b]_{p^e}\in U_{p^e}\). Thus, \(\phi=\phi'\) that is \(\hh{a}{c}{t}{p^e}{q^f}=\hh{a}{c}{t'}{p^e}{q^f}\). Hence, it implies by Proposition \ref{Prop313} that
            \[
\Bigl[\tfrac{dt}{\varphi(q^{f})}\Bigr]_d
= \Bigl[\tfrac{dt'}{\varphi(q^{f})}\Bigr]_d.
\]
            Also, when \(b=1\) and \(u=1\) we have \([q]_{q^f}^{v} [r]_{q^f} = [q]_{q^f}^{v'} [r']_{q^f}.\) Then by \cite[Lemma 3.1]{JoeSophieLiam}, we have \((1)~v, v'\geq f,\)  \((2)~v=v'\) and \( [r]_{q^{f-v}} = [r']_{q^{f-v}}\).

            We observe that if \(v, v'\geq f\) we have \(v, v'=f\) since by hypothesis \(v, v'\leq f\) and this gives \([0]_{q^f} =[0]_{q^f}\). 
            
            In the second condition it follows that \(v=v'\) and \([r]_{q^{f-v}} = [r']_{q^{f-v}}\) that is \(([r]_{q^{f-v}}, v)=([r']_{q^{f-v'}}, v')\). Therefore, in both case, we have 
            \[
\Bigl(\bigl([r]_{q^{f-v}},\,v\bigr),\; \Bigl[\tfrac{dt}{\varphi(q^{f})}\Bigr]_d\Bigr)
= \Bigl(\bigl([r']_{q^{f-v}},\,v'\bigr),\; \Bigl[\tfrac{dt'}{\varphi(q^{f})}\Bigr]_d\Bigr).
\]
            This concludes the proof of the well-definedness of the map.
            
            Now, we are interested in proving that \(H_{a, c}^{p^e, q^f}\) is a monoid morphism.
            We consider \(\psi_{r, \phi, v}\) and \(\psi_{r', \phi', v'}\) in \(\Hom(\mathbb{Z}/p^e\mathbb{Z}, \mathbb{Z}/q^f\mathbb{Z} )\).
            Then we have
            \[H_{a, c}^{p^e, q^f}(\psi_{r, \phi, v}) \diamond 
   H_{a, c}^{p^e, q^f}(\psi_{r', \phi', v'})=\Bigl( \bigl([rr']_{q^{f-\min(v+v',\, f)}}, \min(v+v', f) \bigr), 
      \,\Bigl[\tfrac{d(t+t')}{\varphi(q^{f})}\Bigr]_d \Bigr). \]

            On the other hand, for all \(u\in \mathbb{N}\) and \(b\in \mathbb{Z}\) such that \(\gcd(b, p)=1\) we have
            \begin{align*}
                \psi_{r, \phi,v}([p]_{p^e}^{u}[b]_{p^e}) \psi_{r', \phi',v'}([p]_{p^e}^{u}[b]_{p^e})& =[q]_{q^f}^{uv}[r]_{q^f}^{u}\phi([b]_{p^e}) [q]_{q^f}^{uv'}[r']_{q^f}^{u}\phi'([b]_{p^e})\\
                & =[q]_{q^f}^{u(v+v')}[rr']_{q^f}^{u}\phi([b]_{p^e}) \phi'([b]_{p^e}).\\
                & =[q]_{q^f}^{u(v+v')}[rr']_{q^f}^{u}\phi\phi'([b]_{p^e}).
            \end{align*}
            Note that \(v,v'\geq \lfloor \frac{f}{e}\rfloor\) thus \(v+v'\geq 2\lfloor \frac{f}{e} \rfloor\geq \lfloor \frac{f}{e} \rfloor\).
So, we have 
\[[q]_{q^f}^{u(v+v')}=\begin{cases}
    [0]_{q^f}, &\text{ if } v+v'\geq f\text{ and }u=1\\
    [q]_{q^f}^{v+v'}, &\text{ if } v+v'< f\text{ and }u=1.
\end{cases}
\]
\([q]_{q^f}^{u(v+v')} = [q]_{q^f}^{u\min(v+v',f)}\), and by Proposition \ref{Prop313} \(\phi\phi'\) can be identified with \(\left[\frac{d(t+t')}{\varphi(q^f)}\right]_d\). Then it follows that 
            \begin{align*}
   H_{a, c}^{p^e, q^f}(\psi_{r, \phi, v}\cdot\psi_{r', \phi', v'})
   &= H_{a, c}^{p^e, q^f}\!\bigl(\psi_{rr', \,\phi\phi', \,\min(v+v', f)}\bigr) \\[0.3em]
   &= \Bigl( \bigl([rr']_{q^{f-\min(v+v',\, f)}}, \,\min(v+v', f)\bigr),
      \,\Bigl[\tfrac{d(t+t')}{\varphi(q^{f})}\Bigr]_d \Bigr).
\end{align*}

    Hence, we have \(H_{a, c}^{p^e, q^f}(\psi_{r, \phi, v}\cdot\psi_{r', \phi', v'})=H_{a, c}^{p^e, q^f}(\psi_{r, \phi, v})\diamond H_{a, c}^{p^e, q^f}(\psi_{r', \phi', v'}).\) Therefore, \(H_{a, c}^{p^e, q^f}\) is a monoid morphism.

    Next, we prove that the monoid morphism \(H_{a,c}^{p^e, q^f}\) is injective. Suppose 
    \[\Bigl(\bigl([r]_{q^{f-v}},\,v\bigr),\; \Bigl[\tfrac{dt}{\varphi(q^{f})}\Bigr]_d\Bigr)=
\Bigl(\bigl([r']_{q^{f-v}},\,v'\bigr),\; \Bigl[\tfrac{dt'}{\varphi(q^{f})}\Bigr]_d\Bigr)~\text{in }M,~\text{then we have}\]

\[
v=v',\quad [r]_{q^{f-v}}=[r']_{q^{f-v}},\quad 
\text{and}\quad \Bigl[\tfrac{dt}{\varphi(q^{f})}\Bigr]_d=\Bigl[\tfrac{dt'}{\varphi(q^{f})}\Bigr]_d.
\]

            Consequently, \([q]_{q^{f}}^{v}[r]_{q^{f}}=[q]_{q^{f}}^{v'}[r']_{q^{f}}\) for all \(v, v'\in \llbracket\lfloor \frac{f}{e} \rfloor, f\rrbracket\) and by Proposition \ref{Prop313} \(\phi([b]_{p^e})= \phi'([b]_{p^e}),\; \forall [b]_{p^e}\in U_{p^e}.\) Equivalently,
            \begin{align*}
                &[q]_{q^{f}}^{v}[r]_{q^{f}}\phi([b]_{p^e})=[q]_{q^{f}}^{v'}[r']_{q^{f}}\phi'([b]_{p^e}), \quad\forall [b]_{p^e}\in U_{p^e}, v, v'\in \llbracket\lfloor \frac{f}{e} \rfloor, f\rrbracket\\
                \iff&\psi_{r, \phi, v}([p]_{p^e}[b]_{p^e})=\psi_{r', \phi', v'}([p]_{p^e}[b]_{p^e}), \quad\forall [b]_{p^e}\in U_{p^e}, v, v'\in \llbracket\lfloor \frac{f}{e} \rfloor, f\rrbracket.
            \end{align*}
            Therefore, we obtain \(\psi_{r, \phi, v}=\psi_{r', \phi', v'}.\)

            Finally, it is clear that \(H_{a,c}^{p^e, q^f}\) is surjective. Hence, \(H_{a,c}^{p^e, q^f}\) is a monoid isomorphism.

           \item[\( \bullet\)]  If \( p\mid q-1:\) This case is proven similarly as the previous with \(d=\gcd(p-1, q-1)\).
        \end{itemize}

        \item[(b)] If \(p=2\), \(q\) is odd and \(e\geq 3\). We prove that \(H_{a,c}^{p^e\!, q^f}\) is well-defined. Let \(\psi_{r, \phi, v}\) and \(\psi_{r', \phi', v'}\) in \(\Hom(\mathbb{Z}/2^e\mathbb{Z}, \mathbb{Z}/q^f\mathbb{Z} )\) such that \begin{align}\label{Eq43701}
            \psi_{r, \phi, v}&=\psi_{r', \phi', v'}
        \end{align}
        where \(r, r'\in \mathbb{Z}\), \(\gcd(r, q)=1\) \(\gcd(r', q)=1\), \(v, v'\in \llbracket \lfloor\frac{f}{e}\rfloor, f\rrbracket\) and \(\phi=\hq{c}{t}{2^e}{q^f}\) and \(\phi'=\hq{a}{t'}{2^e}{q^f}\) with \(t=(t_1, t_2)\in\mathbb{Z}^2\) \(t'=(t_1',t_2')\in\mathbb{Z}^2\) such that 
        \[ \varphi(q^f)\mid 2t_1,\ \varphi(q^f)\mid 2t_1',\ \varphi(q^f)\mid t_2\text{ and }\varphi(q^f)\mid t_2'.\]
        
        From equation (\ref{Eq43701}) we have for all \(\alpha\in \llbracket 0, e\rrbracket\), \(\beta\in\lbrace 0,1\rbrace\), and \(\gamma\in \llbracket0, 2^{e-2}-1\rrbracket\)
        \begin{align*}
            \psi_{r, \phi, v}([2]^{\alpha}[-1]^{\beta}[5]^{\gamma}) & =\psi_{r', \phi', v'}([2]^{\alpha}[-1]^{\beta}[5]^{\gamma})\\
            \iff [q]^{\alpha v}[r]^{\alpha}\phi([-1]^{\beta}[5]^{\gamma}) &= [q]^{\alpha v'}[r']^{\alpha}\phi'([-1]^{\beta}[5]^{\gamma}).
        \end{align*}
    From here we observe that, when \(\alpha=0\) then we have 
    \[\phi([-1]^{\beta}[5]^{\gamma})=\phi'([-1]^{\beta}[5]^{\gamma}), \quad \forall \beta\in \lbrace 0, 1\rbrace\text{ and } \gamma\in \llbracket 0, 2^{e-2}-1\rrbracket\] hence \(\phi=\phi'\) implies that \([2t_1/\varphi(q^f)]_2=[2t_1'/\varphi(q^f)]_2\). Also, when \(\gamma=\beta=0\)  and \(\alpha=1\) we have \([q]_{q^{f}}^{v}[r]_{q^{f}}=[q]_{q^{f}}^{v'}[r']_{q^{f}}.\)
    
     By \cite[Lemma 3.1]{JoeSophieLiam} we have \( [r]_{q^{f- v}}=[r']_{q^{f-v}}\) and \(v=v'\) or \(v,v'\geq f\). Since the two conditions are equivalent, we consider the first one. Therefore we have \(v=v'\) and \([r]_{q^{f- v}}=[r']_{q^{f-v'}}\), that is  \(([r]_{q^{f- v}},v)=([r']_{q^{f-v}},v')\).
     Hence, we obtain  \((([r]_{q^{f- v}},v), [2t_1/\varphi(q^f)]_2)=(([r']_{q^{f-v}},v'),[2t_1'/\varphi(q^f)]_2)\), inducing the well-definedness of \(H_{a, c}^{p^e, q^f}\). 

     Furthermore, we prove that \(H_{a, c}^{p^e, q^f}\) is injective. Suppose we have  \[
\Bigl(\bigl([r]_{q^{f-v}},\,v\bigr),\; \Bigl[\tfrac{2t_{1}}{\varphi(q^{f})}\Bigr]_2\Bigr)
= \Bigl(\bigl([r']_{q^{f-v}},\,v'\bigr),\; \Bigl[\tfrac{2t_{1}'}{\varphi(q^{f})}\Bigr]_2\Bigr)~\text{in }M. \text{ Equivalently, we have}
\]
     \[
[r]_{q^{f-v}} = [r']_{q^{f-v}}, \quad
\Bigl[\tfrac{2t_{1}}{\varphi(q^{f})}\Bigr]_2
= \Bigl[\tfrac{2t_{1}'}{\varphi(q^{f})}\Bigr]_2, \quad
v = v'.
\]

     Then, for all \(v,v'\in \llbracket \lfloor \frac{f}{e}\rfloor,f\rrbracket\) we have \([q]_{q^{f}}^{v}[r]_{q^{f}}=[q]_{q^{f}}^{ v'}[r']_{q^{f}}\) and consequently 
     \[[q]_{q^{f}}^{\alpha v}[r]^{\alpha}_{q^{f}}=[q]_{q^{f}}^{\alpha v'}[r']_{q^{f}}^{\alpha},  \quad\forall \alpha\in \llbracket 0, e\rrbracket, v, v'\in \llbracket \lfloor \frac{f}{e}\rfloor,f \rrbracket\]
     and by Proposition \ref{Prop313} we obtain \(\phi([-1]^{\beta}_{2^{e}})=\phi'([-1]^{\beta}_{2^{e}}),~\forall \beta\in \lbrace 0,1\rbrace.\)
     
     On the other hand, we know that \(\varphi(q^f)\mid t_2\) and \(\varphi(q^f)\mid t_2'\) thus \(t_2\equiv t_2' \pmod{\varphi(q^f)}\). Then again by the same Proposition \ref{Prop313} we have \(\phi([5]_{2^e}^{\gamma})=\phi'([5]_{2^e}^{\gamma}),~\forall \gamma\in \llbracket 0, 2^{e-2}-1 \rrbracket.\)
    Combining all these we obtain \(\forall v,v'\in \llbracket \lfloor \frac{f}{e}\rfloor,f\rrbracket\), \(\alpha\in \llbracket 0, e\rrbracket\), \(\beta\in \lbrace 0, 1\rbrace\) and \(\gamma\in \llbracket 0, 2^{e-2}-1\rrbracket\)
    \begin{align*}
        &[q]_{q^{f}}^{\alpha v}[r]_{q^{f}}^{\alpha}\phi([-1]^{\beta}_{2^{e}}[5]^{\gamma}_{2^{e}})=[q]^{\alpha v}_{q^{f}}[r']^{\alpha}_{q^{f}}\phi'([-1]^{\beta}_{2^{e}}[5]^{\gamma}_{2^{e}})\\
        \iff & \psi_{r, \phi, v}([2]^{\alpha}_{2^{e}}[-1]^{\beta}_{2^{e}}[5]^{\gamma}_{2^{e}})=\psi_{r', \phi', v'}([2]^{\alpha}_{2^{e}}[-1]^{\beta}_{2^{e}}[5]^{\gamma}_{2^{e}})\\
        \iff &  \psi_{r, \phi, v}=\psi_{r', \phi', v'}.
    \end{align*}
    Hence \(H_{a, c}^{p^e, q^f}\) is an injective monoid morphism. Moreover, it is clear that \(H_{a, c}^{p^e, q^f}\) is surjective. 
    We conclude that \(H_{a,c}^{p^e, q^f}\) is an isomorphism, that is \(\Hom(\mathbb{Z}/p^e\mathbb{Z}, \mathbb{Z}/q^f\mathbb{Z})\) is isomorphic to \(M\). 
    \end{itemize}

    \item If \(p, q\) are even and \(e\geq 3\), we have to prove first that \(H^{2^e, 2^f}\) is well-defined.
    Let \(r, r'\in \mathbb{Z}\) and \(v, v\in \mathbb{N}^{*}\) such that \(\gcd(r, 2)=1=\gcd(r', 2)\) and \(v, v'\leq f\). We consider \(\psi_{r, \phi,v}, \psi_{r, \phi,v}\in \Hom(\mathbb{Z}/p^e\mathbb{Z}, \mathbb{Z}/q^f\mathbb{Z})\) such that 
    \begin{equation}\label{Eq4370}
        \psi_{r, \phi,v}=\psi_{r, \phi,v}
    \end{equation}
    with \(\phi=\h{t}{2^e}{2^f}\) and \(\phi'=\h{t'}{2^e}{2^f}\) with \(t=(t_1, t_2, t_3, t_4)\in \mathbb{Z}^4\) and \(t'=(t_1', t_2', t_3', t_4')\in\mathbb{Z}^4\) we prove that 
    \begin{align*}
        \left( ([r]_{2^{f-v}},v), [t_1]_2, [t_2]_2, [t_3]_2, [t_4]_{2^{\min(e,f)-2}}\right)=&\left(([r']_{2^{f-v}},v'), [t_1']_2, [t_2']_2, [t_3']_2, [t_4']_{2^{\min(e,f)-2}}\right).
    \end{align*}
    In fact, from equation (\ref{Eq4370}), we have for all \(\alpha\in \llbracket 0, e\rrbracket,\) \(\beta\in\lbrace 0, 1\rbrace,\) \(\gamma\in \llbracket 0, 2^{e-2}-1\rrbracket\)
    \begin{align*}\label{Eq4370}
       \psi_{r, \phi,v}([2]^\alpha[-1]^\beta[5]^\gamma) &= \psi_{r, \phi,v}([2]^\alpha[-1]^\beta[5]^\gamma)\\
       \iff [2]_{2^{f}}^{\alpha v}[r]_{2^{f}}^{\alpha}\phi([-1]_{2^e}^\beta[5]_{2^e}^\gamma) &= [2]_{2^{f}}^{\alpha v'}[r']_{2^{f}}^{\alpha}\phi'([-1]_{2^e}^\beta[5]_{2^e}^\gamma).
    \end{align*} 
    Note that if \(\alpha=0\) we obtain \(\phi([-1]_{2^e}^\beta[5]_{2^e}^\gamma)=\phi'([-1]_{2^e}^\beta[5]_{2^e}^\gamma),\) for all \(\beta\in\lbrace 0, 1\rbrace,\) \(\gamma\in \llbracket 0, 2^{e-2}-1\rrbracket\). It follows from Proposition \ref{Prop313} that 
    \(\left([t_1]_2, [t_2]_2, [t_3]_2, [t_4]_{2^{\min(e,f)-2}}\right)=\left([t_1']_2, [t_2']_2, [t_3']_2, [t_4']_{2^{\min(e,f)-2}}\right).\)
    Also, if \(\beta=\gamma=0\) we have 
    \begin{align*}
        &[2]_{2^{f}}^{\alpha v}[r]_{2^{f}}^{\alpha}=[2]_{2^{f}}^{\alpha v'}[r']_{2^{f}}^{\alpha}\\
        \iff& (\alpha v, \alpha v'\geq f)\text{ or } [r]_{2^{f-\alpha v}}^{\alpha}=[r']_{2^{f-\alpha v'}}^{\alpha}\text{ and } v=v'\quad \text{by \cite[Lemma 3.1]{JoeSophieLiam}}.
    \end{align*}
    Thus, \([r]_{2^{f- v}}=[r']_{2^{f- v'}}\) for \(\alpha=1\) and \(v=v'\) that is \(([r]_{2^{f- v}},v)=([r']_{2^{f-v'}}, v')\). Hence, we obtain 
    \begin{align*}
        \left( ([r]_{2^{f-v}},v), [t_1]_2, [t_2]_2, [t_3]_2, [t_4]_{2^{\min(e,f)-2}}\right)=&\left(([r']_{2^{f-v}},v'), [t_1']_2, [t_2']_2, [t_3']_2, [t_4']_{2^{\min(e,f)-2}}\right).
    \end{align*}
    Therefore, \(H^{2^e, 2^f}\) is well-defined. 

    Next, we prove that \(H^{2^e, 2^f}\) is injective. We consider
    \begin{align*}
        A &= ([r]_{2^{f-v}}, [t_1]_2, [t_2]_2, [t_3]_2, [t_4]_{2^{\min(e,f)-2}},v), \\
        B &= ([r']_{2^{f-v}}, [t_1']_2, [t_2']_2, [t_3']_2, [t_4']_{2^{\min(e,f)-2}},v)
    \end{align*}
     belonging to \(N\) such that we have \(A=B.\) It follows that 
      \[\begin{array}{cccc}
         [r]_{2^{f-v}}=[r']_{2^{f-v}}& [t_1]_2=[t_1']_2,&[t_2]_2=[t_2']_2, \\
         v=v'&[t_3]_2=[t_3']_2&[t_4]_{2^{\min(e,f)-2}}=[t_4']_{2^{\min(e,f)-2}}
    \end{array}\]
    Consequently, we have \([2]_{2^f}^{v}[r]_{2^{f}}=[2]_{2^f}^{v}[r']_{2^{f}}\) for all \(v\in \llbracket\lfloor \frac{f}{e}\rfloor, f\rrbracket\) and by Proposition \ref{Prop313} we have \(\phi([-1]_{2^e}^{\beta})=\phi'([-1]_{2^e}^{\beta})\) and \(\phi([5]_{2^e}^{\gamma})=\phi'([5]_{2^e}^{\gamma})\) for all \(\beta\in \lbrace 0, 1\rbrace\), \(\gamma\in \llbracket 0, 2^{e-2}-1\rrbracket\).
    
Equivalently we obtain for all \(\alpha \in \llbracket 0, e\rrbracket\), \(\beta\in \lbrace 0, 1\rbrace\), \(\gamma\in \llbracket 0, 2^{e-2}-1\rrbracket\) and \(v, v'\in  \llbracket \lfloor \frac{f}{e}\rfloor,f\rrbracket\)
    \begin{align*}
        [2]_{2^f}^{\alpha v}[r]^{\alpha}_{2^{f}}=[2]_{2^f}^{\alpha v'}[r']^{\alpha}_{2^{f}} &\text{ and } \;\phi([-1]_{2^e}^{\beta}[5]_{2^e}^{\gamma})=\phi'([-1]_{2^e}^{\beta}[5]_{2^e}^{\gamma}) \\
        \implies
        [2]_{2^f}^{\alpha v}[r]^{\alpha}_{2^{f}}\phi([-1]_{2^e}^{\beta}[5]_{2^e}^{\gamma}) &=[2]_{2^f}^{\alpha v'}[r']^{\alpha}_{2^{f}}\phi'([-1]_{2^e}^{\beta}[5]_{2^e}^{\gamma}) \\
        \implies \psi_{r, \phi, v}( [2]^{\alpha}_{2^e}[-1]_{2^e}^{\beta}[5]_{2^e}^{\gamma}) &= \psi_{r', \phi', v'}( [2]^{\alpha}_{2^e}[-1]_{2^e}^{\beta}[5]_{2^e}^{\gamma}).
    \end{align*}
    Finally, we have clearly that \(H^{2^e, 2^f}\) is surjective and therefore an isomorphism of monoids. This completes the proof.\qedhere
    \end{enumerate}
\end{proof}

\section*{Declarations}

\subsection{Funding}
Not applicable
\subsection{Conflict of interest}
Not applicable
\subsection{Data availability}
Not applicable

\bibliography{refs}

\end{document}